\newcommand*\rel@kern[1]{\kern#1\dimexpr\macc@kerna}
\newcommand*\WB[1]{%
  \begingroup
  \def\mathaccent##1##2{%
    \rel@kern{0.8}%
    \overline{\rel@kern{-0.8}\macc@nucleus\rel@kern{0.2}}%
    \rel@kern{-0.2}%
  }%
  \macc@depth\@ne
  \let\math@bgroup\@empty \let\math@egroup\macc@set@skewchar
  \mathsurround\z@ \frozen@everymath{\mathgroup\macc@group\relax}%
  \macc@set@skewchar\relax
  \let\mathaccentV\macc@nested@a
  \macc@nested@a\relax111{#1}%
  \endgroup
}
\newtheorem{corollary}{Corollary} 
\newtheorem{definition}{Definition} 
\newtheorem{lemma}{Lemma} 
\newtheorem{proposition}{Proposition}  
\newtheorem{theorem}{Theorem}
\newcommand{\WH}[1]{\widehat{#1}  }
\newcommand{\WT}[1]{\widetilde{#1}  }
\newcommand{\RR}{\mathbb{R}}  
\newcommand{\RRRd}{\RR^{3d}}  
\newcommand{\SL}{SL(3,\mathbb{Z})} 
\newcommand{\LRd}{ \WT{\mathcal{L}}_{{t}  }^d\times \RRRd}   
\newcommand{\LRdt}{   \WH{\LL}_{t}^d\times\RRRd}     
\newcommand{\LL}{\mathcal{L}}  
\newcommand{\KK}{\mathcal{K}}  
\newcommand{\toro}{T\mathbb{T}}   
\newcommand{\MM}{\mathcal{L}_0^d\times \RRRd}
\newcommand{\norm}[1]{\left\lVert#1\right\rVert}
\newcommand{\normll}[1]{\left\lVert#1\right\rVert_2}
\newcommand{\norml}[1]{\left|#1\right|}  
\newcommand{\normLK}[1]{ \left\lVert#1\right\rVert_{ L_{{\KK}_n}^{\infty}}}
\newcommand{\pp}{\mathbf{p}}  
\newcommand{\ppb}{\WB{\pp}}  
\newcommand{\ppt}{\WT{\pp}}     
\newcommand{\qq}{\mathbf{q}}  
\newcommand{\qqb}{\WB{\qq}} 
\newcommand{\qqt}{\WT{\qq}}  
\newcommand{\qqh}{\widehat{\qq}}  
\newcommand{\pph}{\widehat{\pp}}
\newcommand{\ff}{\mathbf{f}}   
\newcommand{\xx}{\mathbf{x}}
\newcommand{\xxb}{\WB{\xx}}
\newcommand{\xxh}{\widehat{\xx}}  
\newcommand{\XX}{\mathbf{X}}
\newcommand{\bb}{\mathbf{b}}
\newcommand{\XXt}{\widetilde{\XX}}
\newcommand{\XXh}{\widehat{\XX}}
\newcommand{\bbt}{\WT{\bb}} 
\newcommand{\XXb}{\WB{\XX}}  
\newcommand{\yy}{\mathbf{y}}
\newcommand{\yyh}{\widehat{\yy}}    
\newcommand{\yyb}{\WB{\yy}}
\newcommand{\EEys}[1]{\mathbb{E}^{{s}  ,\yy}{#1}  } 
\newcommand{\EEF}[1]{\mathbb{E} \Big[\ #1\Big| \mathcal{F}_k  \Big]\ }
\newcommand{\AAI}{ {\Gamma} }
\title[NELD Convergence]{Convergence of Nonequilibrium Langevin Dynamics
for Planar Flows} 
\author{Matthew Dobson and Abdel Kader Geraldo}  
\address{Department of Mathematics, University of Massachusetts Amherst, 
Amherst, MA 01003}
\email{dobson@math.umass.edu}
\date{\today}
\begin{document} 
 
\maketitle     
\tableofcontents
 
\begin{abstract}  
We prove that incompressible two dimensional   nonequilibrium Langevin dynamics (NELD)  converges exponentially fast to a steady-state limit cycle. We use automorphism remapping  periodic boundary conditions (PBCs) techniques such as Lees-Edwards PBCs  and  Kraynik-Reinelt   PBCs  to treat respectively shear flow  and planar elongational flow. After rewriting NELD in   Lagrangian coordinates, the convergence is shown using a  technique similar to [ R. Joubaud, G. A. Pavliotis, and G. Stoltz,2014]. 
\end{abstract}

 \section{Introduction} 
A wide range of nonequilibrium molecular  dynamics (NEMD) techniques~\cite{Evans, todd17} are used in the study of molecular fluids under steady flow, and some recent applications can be found in \cite{Lang-phil,Oconnor-Ge,Oconnor-Al,Nicholson,Oliveira,Nishioka-Akihiro,Templeton,Menzel-Daivis-Todd,Ewen,Li-Zhen,DAIVIS,XuWen,Baranyai,todd1998,todd2000}. 
Here we study the exponential convergence of the probability density of Nonequilibrium Langevin dynamics (NELD)   under   incompressible two dimensional flows such as shear flow and planar elongational flow with spatial periodic boundary conditions (PBCs).  We consider  a molecular system corresponding to the 
micro-scale motion of a fluid with local strain rate $\nabla \textbf{u}$ and  denote the steady background flow matrix of the molecular system by
$A=\nabla \textbf{u} \in \RR^{3 \times 3}$. The coordinates of the simulation box are given by three linearly independent column vectors
coming from the origin, and we write them in a matrix
\begin{align*}
    L_{t}=\begin{bmatrix}\mathbf{v}_{t}^1 &\mathbf{v}_{t}^2 &\mathbf{v}_{t}^3\end{bmatrix}\in \RR^{3 \times 3}, \qquad t \geq 0.
\end{align*} 
The initial simulation box is given by $L_0$ and  the  lattice deforms with the background flow according to the equation 
\begin{align*}
  L_{t}  = {e^{t A}} L_0 .
\end{align*}

If $L_0$ is not chosen appropriately, the simulation box can become extremely stretched with degenerate geometry.  For example, in the elongational flow case, if the compression is parallel to one of the edges of the simulation box, then the box will become degenerate to a point where a particle and its image become arbitrarily close. Thus in order to perform a long simulation, we consider specialized PBCs which consist of using a lattice automorphism represented as a $3 \times 3$ integer matrix with determinant one to remap the simulation box at various times during the simulation.    These types of PBCs were
first used in the shear flow case by Lees and Edwards (LE)~\cite{LE} and were then later extended to the planar elongational flow case 
 by   Kraynik and Reinelt (KR)~\cite{KR}. The analog of these types of PBCs which treat three dimensional flows such as  uniaxial flow, biaxial flow, and generalized three-dimensional diagonalizable flow can be found in \cite{R-KR,Dobson,Hunt}.

The NELD equation is derived in \cite{McPhie, dlls}. We express it in terms of the relative momentum $\ppt$ as 
\begin{align}\label{neld2x}
\begin{cases}
d{\qqt}&=  ({\ppt}+A\qqt) {dt},\\
d{\ppt}&= -\nabla V({\qqt}) {dt} -\gamma  {\ppt} {dt}+\sigma dW,
\end{cases}
\end{align}
where $\sigma^2=\frac{2\gamma}{\beta}$ is the fluctuation coefficient, with $\beta$ as the inverse temperature, and $V$ is the potential. We assume that the gradient of the potential is finite. The position and the momentum of the particles are denoted respectively by $(\qqt,\ppt)\in  \WT{\mathcal{L}}_t^{d}\times\RRRd$,  where the set
\begin{align}\label{domain-euler-absolute}
 \WT{\mathcal{L}}_t& = \{ L_t \mathbf{x} \  {\big|} \   \mathbf{x}  \in \mathbb{T}^3    \}
\end{align}
 defines the time dependent simulation box.
  Note that when there is no background flow, \eqref{neld2x} becomes equilibrium Langevin Dynamics 
\begin{align*} 
\begin{cases}
d{\qq}&=  {\pp} {dt},\\
d{\pp}&=-\nabla V({\qq}) {dt} -\gamma  {\pp} {dt}+\sigma dW, 
\end{cases} 
\end{align*}
with $(\qq,\pp)\in  {\mathcal{L}}_0^{d}\times\RR^{3d}$.   
  
 It  has been shown (see for instance \cite{Talay,Mattinglya2002,cances,neld-grabriel,lelievre-stoltz,luc-ergo})  that under suitable conditions, the equilibrium Langevin Dynamics is ergodic with respect to the Boltzmann-Gibbs distribution 
\begin{align*}
\nu(\qq,\pp)d\qq d\pp=\frac{1}{Z}e^{-\beta H(\qq,\pp)} d\qq d\pp, \quad Z=\int_{\LL_0^{d}\times\RR^{3d}}e^{-\beta H(\qq,\pp)} d\qq d\pp,
\end{align*}
where   $Z$ is the normalization constant, and $H$ is the Hamiltonian of the system given by
\begin{align*}
H(\qq,\pp)=\frac{1}{2}\left\langle\pp,\pp\right\rangle+V(\qq).
\end{align*}
However, convergence to a limiting measure has not been established for NELD under moving domains.   In this paper, we show the  existence, uniqueness, and exponential convergence  of the NELD to a limit cycle following the work done in~\cite{Joubaud}.  
In Section~\ref{sect}, we formulate the NELD in Eulerian and Lagrangian coordinates and the main result of the paper is in Section~\ref{sectt}, where we prove the convergence of the NELD to a probability density function in Proposition~\ref{prop:converge}. 

 \section{Reformulation of NELD in Lagrangian  coordinates}\label{sect}
In this section, we rewrite the NELD equation~\eqref{neld2x}  in Lagrangian coordinates.  We then define the remapped Eulerian domain  under shear flow and planar elongational flow using the LE and KR PBCs respectively in Section~\ref{domain-def}.  In order to distinguish from the remapped coordinates, we refer to the original coordinate systems as ``absolute Eulerian coordinates'' and ``absolute Lagrangian coordinates.''   We then derive the NELD in the remapped coordinates in Section~\ref{neld-lagrangian-derivation}.
 We start by considering   the change of variables from the absolute Eulerian to Lagrangian coordinates
\begin{align}\label{change1}
\begin{cases}
\qq  &= {e^{-t A}} \qqt , \quad \qq\in\LL_0^{d}, \ \qqt\in\WT{\mathcal{L}}_t^d \\
\pp  &={e^{-t A}} \ppt , \quad \pp\in\RRRd.
\end{cases} 
\end{align}
Computing the time derivative of the position in \eqref{change1}, and using \eqref{neld2x}, we have
\begin{align*}
 {d } \qq  &=  {e^{-t A}}\big( {d \qqt}-A\qqt{{dt}}\big)  =\pp {dt},  
\end{align*}
 taking the time derivative of $\pp$ gives
\begin{align*}
 {d } \pp  &=  {e^{-t A}}\big( {d \ppt}-A\ppt{{dt}}\big) =  -{e^{-t A}}{{\nabla {V}({e^{t A}} {\qq})}} {dt} -\AAI \pp {dt}+   \sigma {e^{-t A}} dW ,
\end{align*}
where $\AAI=(\gamma+A)$. Thus, the NELD equation in  Lagrangian coordinates is written as
\begin{align*}
\begin{cases}
d \qq&= \pp {dt},\\
d\pp&= -{e^{-t A}}{{\nabla {V}({e^{t A}} {\qq})}} {dt} -\AAI \pp {dt}+   \sigma {e^{-t A}} dW,
\end{cases}
\end{align*}
Before we derive the NELD equation in remapped Lagrangian coordinates under LE and KR PBCs, we give some background on those PBCs.

\subsection{Remapping the Unit Cell}\label{domain-def}
We start by defining  the remapped Eulerian domain under the shear flow followed by the planar elongational flow case. 
\subsubsection{Shear Case}
We denote the background  matrix of  the shear flow    by
\begin{align*}
A=\begin{bmatrix}
0 & \epsilon & 0 \\
0 & 0 & 0 \\
0 & 0 & 0
\end{bmatrix}, \  \epsilon\in\RR^{\ast}.
\end{align*}
At a time $t$, the basis vectors for the simulation box
are the columns of the matrix  
\begin{align*} 
L_t= \begin{bmatrix}1& t \epsilon& 0\\0&1&0\\0&0&1\end{bmatrix}  L_0  \textrm{ where }L_0 = \begin{bmatrix}1&  0& 0\\0&1&0\\0&0&1\end{bmatrix}.
\end{align*}
Since $L_t$ is highly sheared as $t$ becomes large, the  interparticle interaction computation becomes more difficult. We can prevent  this anomaly by applying the LE PBCs which consists in  
multiplying $L_t$ by the lattice automorphism matrix  
\begin{align*}
M^k=\begin{bmatrix}1& -1& 0\\0&1&0\\0&0&1\end{bmatrix}^k=\begin{bmatrix}1& -k& 0\\0&1&0\\0&0&1\end{bmatrix}, \quad k\in \mathbb{Z},
\end{align*}
to get the remapped simulation box lattice 
\begin{align*}
L_t M^k=\begin{bmatrix}1& t\epsilon-k& 0\\0&1&0\\0&0&1\end{bmatrix}, k \in \mathbb{Z}.
\end{align*}
Since $M$ is an integer matrix with determinant equal to one 
($M \in \SL$), the lattice basis vectors in $L_t$ and $L_t M^k$ generate the same lattice.
By choosing  $k=-\left \lfloor t \epsilon\right \rceil $, where $\left \lfloor x\right \rceil $ denotes $x$  rounded to nearest integer, we ensure
that the stretch is at most half of the simulation box. Then we observe that the stretch   matrix is time-periodic with the period $T=\frac{1}{\epsilon}$ :
\begin{align}\label{stretch}
\begin{bmatrix}0& t\epsilon-\left \lfloor t\epsilon\right \rceil & 0\\0&0&0\\0&0&0\end{bmatrix} = [t]\begin{bmatrix}0& \epsilon& 0\\0&0&0\\0&0&0\end{bmatrix} = [t]A, \quad\textrm{where} \ [t]\equiv t \ \mathrm{mod}\ T.
\end{align}
This implies that the particle position belongs to  remapped Eulerian domain  
\begin{align}\label{domain1}
 \WH{\LL}_{t}&=\{   {e^{[t]A}}   L_0\mathbf{x}  {\big|} \mathbf{x}  \in \mathbb{T}^3 \},\ \textrm{where} \ \mathbb{T}^3=\RR^3 \backslash \mathbb{Z}^3.
\end{align}
In the Section~\ref{remm}, we analyse the particles remapped position in $ \WH{\LL}_{t}$ and in the  unit cell. 
\subsubsection{Planar Elongational Flow case}
We consider the planar elongational flow (PEF) case with 
background flow matrix given by 
\begin{align*}
A=\begin{bmatrix}
\epsilon &0 & 0 \\
0 & -\epsilon & 0 \\
0 & 0 & 0
\end{bmatrix}, \  \epsilon\in\RR^{\ast},
\end{align*}
which means that the simulation box elongates in the $x$ direction and shrinks in the $y$ direction of the standard coordinate plane. As $t$ goes to infinity,   a particle and its image can become arbitrarily close if an edge of the simulation box is aligned with the $y$ coordinate.  This would lead to a breakdown in the simulation. We prevent this issue by applying the KR PBCs, which consists in carefully choosing the alignment of the initial simulation box and remapping the simulation box with   a  matrix  ${M}\in \SL$.  We choose $M$ such that it is diagonalizable with eigenvalues of the form 
\begin{align*}
M  S=S \Lambda , \quad \Lambda= \begin{bmatrix}
\lambda &0 & 0 \\
0 & \lambda^{-1}  & 0 \\
0 & 0 &1
\end{bmatrix}  
, \quad \lambda > 0,  \quad \lambda \neq 1.
\end{align*} 
We choose the initial lattice $L_0=S^{-1}$  rather than 
the standard coordinate directions, and this will prevent particle
images from becoming too close. 
If we remap the lattice basis vectors by applying $M^k,$ 
we get
\begin{align*}
L_{t } M^k= e^{ t A} L_0  M^{k} =  e^{t \epsilon D} \Lambda^n  S^{-1}= e^{ (t\epsilon + k\eta ) D }S^{-1} \textrm{, where }D= \begin{bmatrix}
1 &0 & 0 \\
0 &-1 & 0 \\
0 & 0 &0
\end{bmatrix},\quad\eta=\log(\lambda).
\end{align*}
Letting $T=\frac{\eta}{\epsilon}$, 
the  stretched matrix $[t]A$ and the position domain of the particles are also respectively  expressed  in the periodic form as in ~\eqref{stretch} and   \eqref{domain1}.

 \subsection{Remapped coordinates}  \label{remm} 
Note that under the PBCs, the remapping of the simulation box is followed by remapping the particle positions to lie within the simulation box.  In order to include this remapping in the dynamics, we write down the remapping function for both particle positions and momenta in both Eulerian and Lagrangian coordinate systems.  

\subsubsection{Remapping particle positions in Eulerian coordinates}
Here, we define the function which remaps the particle positions from the absolute Eulerian domain $\WT{L}_t$~\eqref{domain-euler-absolute} to the remapped Eulerian domain $\WH{L}_t$~\eqref{domain1}.  This function chooses the image particle that lives within the unit
cell for the remapped lattice.   
We start by defining the modulus operation applied to each vector component 
\begin{align*}
{ {\mathbf{g}}(\xx)  \equiv \xx \ \mathrm{mod}\  1 }, \textrm{ where } \ \xx \in \RR^3.
\end{align*}
We then compute $ e^{-[t] A}  \qqt,$ which maps the particles back to a point in time when an integer number of periods have occurred.  Multiplying by $L_0^{-1}$ expresses the particle position in lattice coordinates, where the coordinates corresponding to unit cell is given by $[0,1]^3,$ but the particle coordinates may be outside this cube due to the cell's deformation.  Applying the modulus operation $\mathbf{g}$ finds the coordinates of images within the unit cell, and then we map back to Eulerian space by multiplying by   $e^{   [t]A } L_0,$ which gives 
\begin{align*} 
\WH{\mathbf{g}}_t  &: \WT{\LL}_t \rightarrow \WH{\LL}_t 
\qquad
\qqh = \WH{\mathbf{g}}_t(\qqt)= e^{   [t]A } L_0  \,  {\mathbf{g}}  \left(  L_0^{-1}  e^{-[t] A}  \qqt \right).
\end{align*}

  \subsubsection{Remapping particle positions in Lagrangian coordinates}\label{sect_remap}
We use the Eulerian remapping function to remap the Lagrangian coordinates, as shown in the following diagram:
 \begin{equation*}
    \begin{tikzcd}
\WT{\LL}_t\arrow{r}{\WH{\mathbf{g}}_t} & \WH{\LL}_t\\
\LL_0\arrow{u}{e^{tA}} \arrow{r}{\WB{\mathbf{g}}_t} & \LL_0
 \arrow{u}[swap]{e^{[t]A}} 
\end{tikzcd}   
\end{equation*}
We map from absolute Lagrangian coordinates to Eulerian coordinates using~\eqref{change1} , perform the remapping $\WH{\mathbf{g}}_t$, and then returning to Lagrangian coordinates:
\begin{align*} 
\WB{\mathbf{g}}_t  &: {\LL}_0 \rightarrow {\LL}_0 
\qquad \qqb = \WB{\mathbf{g}}_t(\qq)
= e^{-[t]A} \, \WH{\mathbf{g}}_t \left(  e^{t A}  \qq \right).
\end{align*}
 
\subsubsection{Remapping particle momenta}
We now remap the momentum of the particles from the absolute to the remapped domains. 
 We start by taking the time derivative of $\qqb$, we get
\begin{align*}
 d\qqb=d\WB{\mathbf{g}}_t(\qq) = e^{ \left\lfloor {\frac{t}{T}} \right\rfloor TA}\pp dt =\ppb dt,
\end{align*}
where we have defined the remapped momentum 
 \begin{align}\label{momentumRemapEuler}
 \WB{\mathbf{h}}_t  &:  \RRRd  \rightarrow \RRRd
\qquad \ppb = \WB{\mathbf{h}}_t(\pp)= e^{ \left\lfloor {\frac{t}{T}} \right\rfloor TA}\pp.
\end{align}
We chose the above mapping based on the form of the position equation $ d\qqb= \ppb dt,$ and we now show that this choice leads to consistency of the NELD equations in remapped Eulerian coordinates.  In remapped Eulerian coordinates, the domain is time periodic but the coefficients of the NELD are not changed by the remapping.
We denote the position in the remapped Eulerian domain by 
\begin{align*}
\qqh =e^{[t]A}\qqb, \quad \qqh \in \WH{\LL}_t,  
\end{align*}
and then by taking the time derivative of $\qqh$ over a  periodic time $\theta =[t] \in [0,T)$ , we get
\begin{align*} 
 {d } \qqh  &=  {e^{\theta A}}\big( {d \qqb}+A\qqb d\theta\big)  ={e^{\theta A}}(\ppb+A\qqb) d\theta=(\pph+A \qqh)d\theta, \label{ew1mn} 
\end{align*} 
where we have defined the relative momentum of the particles in the remapped Eulerian coordinates by
\begin{align*}
\pph = e^{ [t]A}\ppb.
\end{align*}
Since $\ppt =e^{tA} \pp$ by \eqref{change1}, and by using \eqref{momentumRemapEuler}, the relative momentum of the particles in the Eulerian domain are the same, as 
\begin{align*} 
\pph = e^{ [t]A}\ppb =e^{tA} e^{- \left\lfloor {\frac{t}{T}} \right\rfloor TA}\ppb = e^{tA} \pp =\ppt.
\end{align*}
Thus the NELD as a function of $(\qqh_{kT+\theta},\pph_{kT+\theta})\in \WH{\LL}_t^d\times\RRRd$ which is written as
 \begin{align*} 
\begin{cases}
d   \qqh_{kT+\theta}&=  \big( \pph_{kT+\theta} +A\qqh_{kT+\theta}\big){d\theta} \\
d\pph_{kT+\theta}&=- \nabla {V}\big(\qqh_{kT+\theta}\big)  {d\theta} -  \gamma \pph_{kT+\theta} {d\theta}+   \sigma  dW_{kT+\theta} ,
\end{cases}
\end{align*}
 has the same coefficients as \eqref{neld2x}.
 
Now let us define the momentum remapping function in the Eulerian domain  by
 \begin{align*}
   \WH{\mathbf{h}}_t: & \ \RRRd  \to \RRRd \quad \WH{\mathbf{h}}_t(\ppt)=\ppt .
\end{align*}
We summarize the results in this section using the following definitions:
\begin{definition}
   The remapping function from the absolute   to the remapped Lagrangian coordinates is given by 
   \begin{align*}
{\mathfrak{R}}_t:& \ \MM\to\MM \qquad {\mathfrak{R}}_t((\qq_t,\pp_t))=\big( \WB{\mathbf{g}}_t(  \qq_{t}),
\WB{\mathbf{h}}_t(\pp_t) \big) .
\end{align*}
\end{definition} 
\begin{definition}
   The remapping function from the absolute to the remapped Eulerian coordinates is given by
\begin{align} 
\WT{\mathfrak{R}}_t:&  \ \WT{\LL}_t^d\times\RRRd \rightarrow \WH{\LL}_t^d\times\RRRd \qquad \WT{\mathfrak{R}}_t((\qqt_t,\ppt_t))=\big({\WH{\mathbf{g}}_t}(  \qqt_{t}) ,
 \WH{\mathbf{h}}_t(\ppt_t)\big).
\end{align}
\end{definition} 
Using the above definitions, and the linear map
\begin{align}
 \Phi_{t} &: \MM \to \LRdt \qquad
  \Phi_t(\XXb_{t})=\begin{bmatrix}e^{ [t]A}  &0\\ 0  &e^{[t]A} \end{bmatrix}\XXb_t,\label{change-var-}\\ 
 \WT{\Phi}_t &: \MM \to \LRd \qquad
  \WT{\Phi}_t(\XX_t)=\begin{bmatrix}e^{t A}  &0\\ 0  &e^{t A} \end{bmatrix} \XX_t,  \label{change-var}
\end{align}
we have the commutative diagram as following: 
\begin{equation*}
    \begin{tikzcd}
\LRd\arrow{r}{\WT{\mathfrak{R}}_t} & \LRdt\\
\MM\arrow{u}{\WT{\Phi}_t} \arrow{r}{{\mathfrak{R}}_t} &  \MM
 \arrow{u}[swap]{{\Phi}_t} \\ 
\end{tikzcd}   
\end{equation*} 
The remapping function from the remapped Lagrangian  to the remapped Eulerian domain is given by
\begin{align}\label{Linear-map}
 \Phi_{t}=\WT{\mathfrak{R}}_t\circ\WT{\Phi}_t\circ{\mathfrak{R}}_t^{-1}.
\end{align}

\subsection{NELD with LE and KR  PBCs in the Lagrangian coordinates}\label{neld-lagrangian-derivation}

Using the function ${\mathfrak{R}}_t$, we derive the NELD in the remapped Lagrangian domain. The coefficients of the NELD are time periodic, and we use this periodicity in the analysis that follows. In fact, we use ${\mathfrak{R}}_t$ to consider  the change of variables  
\begin{align*}
\begin{cases}
    \qqb_{kT+\theta} = \WB{\mathbf{g}}_k(  \qq_{kT+\theta}), \\
 \ppb_{kT+\theta}=  e^{ k TA}  \pp_{kT+\theta},
\end{cases}
\end{align*}
where   $\theta=[t] \in [0,T)$ and    $t\in [kT,(k+1)T)$.
We use $\WB{\mathbf{g}}_k$ to define $\WB{\mathbf{g}}_t$ for   $t\in [kT,(k+1)T)$. 
Changing variables in the position equation of the NELD  gives
\begin{align*}
\frac{d}{{d\theta}}\qqb_{kT+\theta}=&\frac{d}{{d\theta}} \WB{\mathbf{g}}_k (  \qq_{kT+\theta} ) ={\nabla }_{\qq} \WB{\mathbf{g}}_k (\qq_{kT+\theta} )  \frac{d}{{d\theta}} \qq_{kT+\theta}=e^{ k TA}  \pp_{kT+\theta} =  \ppb_{kT+\theta}  ,
\end{align*}
while in the momentum, we have
\begin{align*}
d\ppb_{kT+\theta}&= e^{ k TA}  d\pp_{kT+\theta} 
=  - e^{ - \theta A } (\nabla {V}\circ\WT{\mathbf{g}}_k)(  e^{ \theta A } \qqb_{kT+\theta})  {d\theta} -\Gamma\ppb_{kT+\theta} {d\theta}+   \sigma  e^{ - \theta  A }  dW_{kT+\theta}.
\end{align*}
Thus the NELD  as a function of  $(\qqb_{kT+\theta},\ppb_{kT+\theta})\in\LL_0^d\times\RRRd$ is given by  
\begin{align} \label{NELD-22-3}
\begin{cases}
d   \qqb_{kT+\theta} &=  \ppb_{kT+\theta} {d\theta} ,\\
d\ppb_{kT+\theta}&= - e^{ - \theta A } \nabla {V}\big(e^{ \theta A }   \qqb_{kT+\theta}\big) {d\theta} -\Gamma\ppb_{kT+\theta} {d\theta}+   \sigma  e^{ - \theta  A }  dW_{kT+\theta}.
\end{cases}
\end{align}
       
\section{Ergodicity of NELD under Planar Flow}\label{sectt}
We  start this section by deriving the forward and backward Kolmogorov equation of the NELD in Section~\ref{sectProperty}, then in Section~\ref{sectProposition} we prove the main result of the paper, the convergence of the NELD to a limit cycle.    
 \subsection{Fokker-Planck Equation of NELD in Eulerian and Lagrangian Coordinates}\label{sectProperty} 
 \label{generator}

We now derive  the forward and backward Kolmogorov equations  of   NELD in Eulerian and Lagrangian  coordinates.  In the absolute Eulerian coordinates, the NELD equations of motion are rewritten as
\begin{align}\label{SDE-eurler}
\begin{cases}
d \XXt_{{t}  }=\bbt(\XXt_{{t}  }) {dt} +\WT{\Sigma} dW_{{t}  },\quad \XXt_{{t}  }\in \LRd ,\\
\XXt_{{t}  }=\begin{bmatrix}\qqt\\\ppt\end{bmatrix},\quad \bbt(\XXt_{{t}  })=\begin{bmatrix}\ppt+A\qqt \\ ( -\nabla V({\qqt})  -\gamma  {\ppt}  \end{bmatrix},\quad \WT{\Sigma} =\begin{bmatrix}0 & 0\\0 & \sigma\end{bmatrix}.
\end{cases} 
\end{align} 
Given any smooth function $\WT{\ff} \in C^{\infty}(\LRd ; \RR)$  It\^o's lemma says that 
\begin{align}\label{EW}
d\WT{\ff}(\XXt_{{t}  }) = \big( \WT{U}_t \WT{\ff}\big)(\XXt_{{t}  })dt+\left\langle\nabla\WT{\ff}(\XXt_{{t}  }),\WT{\Sigma} {dW} \right\rangle, 
\end{align}  
where
\begin{align*}
 {\WT{U}_t}=\left\langle\ppt+A\qqt , \nabla_{\qqt} \cdot\right\rangle+\left\langle-\nabla V(\qqt)   ,\nabla_{\ppt}\cdot\right\rangle-\gamma\left\langle\ppt, \nabla_{\ppt}\cdot\right\rangle +\frac{1}{2} \sigma  \sigma^T:\nabla^2 .
\end{align*}
The symbol $:$ and $\left\langle\cdot,\cdot\right\rangle$ denote the Frobenius inner product.
Moreover, we derive the generator ${U}_t$ of the NELD in the absolute Lagrangian coordinates as follows: rewriting~\eqref{NELD-22-3} in the absolute Lagrangian coordinates as
\begin{align}\label{SDE}
\begin{cases}
d \XX_{{t}  }=\mathbf{b}(\XX_{{t}  },{t}  ) {dt} +\Sigma({t}  )dW_{{t}  },\quad \XX_{{t}  }\in \MM \\  
\XX_{{t}  }=\begin{bmatrix}\qq\\\pp\end{bmatrix},\quad \bb(\XX_{{t}  },{t}  )=\begin{bmatrix}\pp\\ -{e^{-t A}} {{\nabla {V}({{e^{t A}}  } {\qq})} } -\AAI \pp \end{bmatrix},\quad \Sigma({t}  )=\begin{bmatrix}0 & 0\\0 & \sigma{e^{-t A}} \end{bmatrix}.
\end{cases} 
\end{align}
When applying It\^o's lemma, we compute \eqref{EW} using the
change of variables  as 
\begin{align*}
  d(\WT{\ff}\circ\WT{\Phi}_t)( \XX_t) 
 = &  \big(\partial_t+{U}_t\big)(\WT{\ff}\circ\WT{\Phi}_t)( \XX_{{t}  })dt+\left\langle\nabla_{\XX}(\WT{\ff}\circ\WT{\Phi}_t)( \XX_{{t}  }),\Sigma({t}  )  {dW} \right\rangle, 
\end{align*}  
where  
\begin{align*}
 {{U}_t}=\left\langle\pp,\nabla_{\qq} \cdot\right\rangle-\left\langle {e^{-t A}} {\nabla {V}({{e^{t A}}  } {\qq})} ,\nabla_{\pp} \cdot \right\rangle-\left\langle\AAI\pp,\nabla_{\pp}\cdot\right\rangle+\frac{1}{2}(\sigma{e^{-t A}} ) (\sigma{e^{-t A}} )^T:\nabla^2.  
\end{align*}       
Now let us write  the strong solution  of \eqref{SDE-eurler} as
\begin{align*}
\XXt_{{t}  }-\XXt_{{s}  }&=\int_{{s}  }^{{t}  }\bbt(\XXt_{{u}  }) {du} +\WT{\Sigma}  dW_{{u}  }
,
\end{align*}
 and   define the density transition function from one state to another in the continuous-time Markov chain $\XXh_t=\WT{\mathfrak{R}}_t(\XXt_{t}) $ by 
\begin{align*}
\WH{\mathcal{P}}_t(\yyh,\WH{B}_t) =\mathbb{P}\big(\XXh_{{t}  }\in \WH{B}_t {\big|} \XXh_{{s}  }=\yyh\big)=\int_{\WH{B}_t} \WH{\psi}(t,\xxh{\big|} s,\yyh)d\xxh, \textrm{ where }  \WH{\psi}({t}  , \xxh \, | {t}  , \yyh) = \delta(\xxh-\yyh),
\end{align*}
 $ \forall \xxh\in \LRdt, \ \WH{B}_t\in \mathcal{B}(\LRdt)$. Here  $  \mathcal{B}(\LRdt)$ denotes the B\"orel $\sigma$-algebra on  $\LRdt$. 
We transform to the remapped Eulerian coordinates using $\WT{\mathfrak{R}}_t$. 
Using \eqref{Linear-map}, we define
\begin{align*}
 {\ff} : \LRdt \to \RR \quad \ff=\WT{\ff} \circ  \WT{\mathfrak{R}}_t^{-1},
\end{align*}
such that
\begin{align*}
\ff\circ\Phi_{t}\circ{\mathfrak{R}}_t=\WT{\ff} \circ  \WT{\mathfrak{R}}_t^{-1}\circ\Phi_{t}\circ{\mathfrak{R}}_t= \WT{\ff} \circ \WT{\Phi}_t  : \MM \to \RR .
\end{align*}
Then  in the remapped Eulerian domain $\WH{\LL}_t$, we define the expectation of $  {\ff}  : \LRdt \to \RR$ with respect to the probability density function above by
\begin{align} \label{expectationE} 
{\phi}({s}  ,\yyh)=\EEys{\ff(\XXh_{{t}  })}=&\int_{\LRdt}\ff(\xxh) {\psi}(t,\xxh {\big|} {s}   ,\yyh) d\xxh. 
\end{align}   
Similarly, in the absolute Lagrangian domain, let us write  the strong solution  of \eqref{SDE} as
\begin{align*}
\XX_{{t}  }-\XX_{{s}  }&=\int_{{s}  }^{{t}  }\bb(\XX_{u}) {du} + {\Sigma}(u)  dW_{u},
\end{align*}
 and   define the density transition function from one state to another in the continuous-time Markov chain $\XXb_t={\mathfrak{R}}_t(\XX_{{t}  }) $ by
\begin{align*}
\WB{\mathcal{P}}_t(\yyb,\WB{B}) =\mathbb{P}\big(\XXb_{{t}  }\in \WB{B}  {\big|} \XXb_{{s}  }=\yyb\big)=\int_{\WB{B}} {\psi}(t,\xxb{\big|} s,\yyb)d\xxb, \textrm{ where }  {\psi}({t}  , \xxb \, | {t}  , \yyb) = \delta(\xxb-\yyb),
\end{align*}
 $ \forall \xxb\in \MM, \ \WB{B} \in \mathcal{B}(\MM)$ .
We define a measurable function $  {\ff}\circ\Phi_{t} : \MM \to \RR$. 
 Using the change of variables~\eqref{change-var-}, we can rewrite the  expectation \eqref{expectationE}    with respect to the probability density function above as
\begin{align*}  
 \big({\phi}\circ\Phi_{s}\big)(s, \yyb)=\EEys{(\ff\circ\Phi_{t})(\XXb_{{t}  })}=&\int_{\MM}(\ff\circ\Phi_{t})(\xxb) {\psi}(t,\xxb {\big|} {s}   ,\yyb)d \xxb  ,\quad s\leq t 
.
\end{align*}
Now we  derive  the backward Kolmogorov equation for the NELD in following lemma:
\begin{lemma}\cite[Theorem 6.1]{friedman}
The backward Kolmogorov equation for the NELD is
\begin{align*} 
\begin{array}{ccc}
   { \partial_{{s}  }{\psi}(t,\xxb{\big|} {s}  ,\yyb) +\big({U}_{s} {\psi}\big)(t,\xxb{\big|} {s}  ,\yyb)}=0,     &    \textrm{where}     & \left. {\psi}(t, \xxb \, | {s}  , \yyb)\right|_{t=s} = \delta(\xxb-\yyb) ,  \\
   { \partial_{{s}  }\WH{\psi}(t,\xxh\big| s,\yyh) +\big(\widehat{U}_{s} \widehat{\psi}\big)(t,\xxh\big| s,\yyh)}=0,   &     \textrm{where}     &   \left.\WH{\psi}(t, \xxh ,\big| s,\yyh)\right|_{t=s} = \delta(\xxh-\yyh) , \ {s}   <t .\nonumber
\end{array} 
\end{align*}
\end{lemma}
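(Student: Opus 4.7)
The plan is to invoke the cited Friedman theorem directly: verify that the time-inhomogeneous SDEs \eqref{SDE} and \eqref{SDE-eurler} satisfy its hypotheses, and then translate the PDE it produces into the notation of the lemma. I would argue for the absolute Lagrangian system first and then transport the result to the absolute Eulerian system, either by repeating the argument for \eqref{SDE-eurler} or by conjugating through the linear change of variables in \eqref{Linear-map}.

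First I would verify the regularity hypotheses. In \eqref{SDE}, the drift $\bb(\XX,t)$ is Lipschitz in $\XX$ uniformly on bounded time intervals, since $\nabla V$ has bounded gradient by assumption and the matrix exponentials $e^{\pm tA}$ are analytic in $t$; the diffusion $\Sigma(t)$ depends only on $t$ and is smooth. The absolute Eulerian drift $\bbt$ and diffusion $\WT{\Sigma}$ in \eqref{SDE-eurler} are even time-independent with the same bounded-derivative properties. These are precisely the conditions under which Friedman's theorem guarantees the existence of a smooth transition density and its satisfaction of the backward PDE.

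Next I would derive the equation. For a smooth compactly supported test function $\WT{\ff}$, applying the It\^o expansion already computed above to $\WT{\ff}\circ\WT{\Phi}_t$ along $\XX_t$ and taking the conditional expectation $\mathbb{E}^{s,\yyb}$ kills the stochastic integral, giving
\begin{align*}
\partial_s\bigl(\phi\circ\Phi_s\bigr)(s,\yyb)+U_s\bigl(\phi\circ\Phi_s\bigr)(s,\yyb)=0.
\end{align*}
Because this identity holds for arbitrary $\WT{\ff}$ and $(\phi\circ\Phi_s)(s,\yyb)$ equals the duality pairing of $\ff\circ\Phi_t$ against the kernel ${\psi}(t,\cdot\,|\,s,\yyb)$, the transition density weakly solves the claimed backward equation in its backward variables $(s,\yyb)$. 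The hypoelliptic structure of $U_s$ (noise drives $\pp$ and the commutator with the $\qq$-part of the drift recovers the missing directions) upgrades this to a classical solution, while the initial condition ${\psi}|_{t=s}=\delta(\xxb-\yyb)$ is built into the definition of the transition density. The Eulerian equation follows identically with $U_s$ replaced by $\WT{U}_s=\WH{U}_s$, or equivalently by conjugating through $\Phi_t$ using \eqref{Linear-map} and the commutative diagram of Section~\ref{remm}.

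The main potential obstacle is the genuine time-inhomogeneity of $U_s$ through $e^{\pm tA}$: one cannot invoke the textbook time-homogeneous version, and the hypoelliptic regularity must be phrased uniformly in $s$ on compact intervals. Both issues are, however, standard in Friedman's formulation and are handled by the analyticity of the matrix exponential together with the uniform bracket-generating condition on $U_s$, so the proof amounts to careful bookkeeping rather than any new ingredient.
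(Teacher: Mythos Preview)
The paper does not supply a proof for this lemma at all: it is stated with the attribution \cite[Theorem 6.1]{friedman} and no argument or \texttt{proof} environment follows; the text proceeds immediately to the forward Kolmogorov equation in Lemma~\ref{forward-kolmo-lemma}. Your proposal therefore does considerably more than the paper --- you actually sketch the verification of Friedman's hypotheses and the derivation via It\^o's formula, hypoellipticity, and the change of variables $\Phi_t$, whereas the paper treats the result as a black-box citation.

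Your sketch is sound, and the one genuine subtlety you flag --- that the diffusion is degenerate, so one needs the hypoelliptic version rather than the uniformly parabolic one --- is exactly the right observation. A minor remark: the paper's standing assumption is that ``the gradient of the potential is finite,'' which you have read as $\nabla V$ bounded; that is the natural reading and is what is used elsewhere (e.g.\ in the Lyapunov estimate), so your Lipschitz verification is justified.
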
 

The forward Kolmogorov equation of the NELD is given  in the following lemma: 
\begin{lemma}\label{forward-kolmo-lemma}
For  an operator $G$,   let us  denote the adjoint operator with respect to Lebesgue measure by $G^{\dagger}$.
The forward Kolmogorov equation of the NELD is
\begin{align}\label{forward-kolmo}
    \big(-\partial_{t}\psi +{U}_{t}^{\dagger}\psi\big) (t,\xxb{\big|} {s}  ,\yyb)=0 \textrm{ and } \big(-\partial_{t}\WH{\psi} +\WH{U}^{\dagger}_t\WH{\psi}\big)  (t,\xxh \big| s,\yyh) =0 .
\end{align}
\end{lemma}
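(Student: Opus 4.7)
The plan is to derive the forward equation directly from It\^o's formula by duality, rather than invoking a generic Fokker-Planck theorem. First, I would fix a smooth test function $f:\MM\to\RR$ that is $\LL_0^d$-periodic in position and Schwartz in momentum, and apply It\^o's lemma to $f(\XX_t)$ along the Lagrangian SDE~\eqref{SDE}. Since $f$ has no explicit time dependence, the bounded-variation part of $df(\XX_t)$ equals $(U_t f)(\XX_t)\,dt$ and the remainder is a martingale. Taking conditional expectation with $\XX_s=\yyb$ and differentiating in $t$ produces
\begin{align*}
\partial_t\int_{\MM} f(\xxb)\,\psi(t,\xxb|s,\yyb)\,d\xxb
=\int_{\MM}(U_t f)(\xxb)\,\psi(t,\xxb|s,\yyb)\,d\xxb,
\end{align*}
and the Lebesgue adjoint $U_t^{\dagger}$ rewrites the right-hand side as $\int_{\MM} f(\xxb)\,(U_t^{\dagger}\psi)(t,\xxb|s,\yyb)\,d\xxb$, so that
\begin{align*}
\int_{\MM} f(\xxb)\bigl[-\partial_t\psi+U_t^{\dagger}\psi\bigr](t,\xxb|s,\yyb)\,d\xxb=0
\end{align*}
holds for every admissible $f$; varying $f$ then yields the first identity of~\eqref{forward-kolmo}.

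For the remapped Eulerian version I would run the same argument with the generator $\WH{U}_t$ acting on the periodic box $\WH{\LL}_t^d\times\RRRd$; alternatively, the Lagrangian identity can be transported through the factorization~\eqref{Linear-map}, namely $\Phi_t=\WT{\mathfrak{R}}_t\circ\WT{\Phi}_t\circ{\mathfrak{R}}_t^{-1}$, by a change of variables. Because $A$ is traceless in both the shear and planar elongational cases, the maps $\Phi_t$ and $\WT{\Phi}_t$ have unit Jacobian, so the adjoint identity passes through without producing any correction term and we obtain the second identity of~\eqref{forward-kolmo}.

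The delicate step is the integration by parts that defines $U_t^{\dagger}\psi$ and $\WH{U}_t^{\dagger}\widehat{\psi}$. In position, $\LL_0^d$ is compact (periodic), so all boundary terms vanish. In momentum $\pp\in\RRRd$ one needs the transition density together with its first two derivatives to decay rapidly at infinity; this is the main technical input and follows from the hypoellipticity of $\partial_s+U_s$ (the Brownian noise on the momentum component, combined with the Hamiltonian coupling to position, renders the operator hypoelliptic) together with the Gaussian-type parabolic estimates underlying the Friedman theorem already cited for the backward equation, which also supply the regularity in $t$ needed to differentiate under the integral sign.
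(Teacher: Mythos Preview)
Your proposal is correct and follows essentially the same duality argument as the paper: pair a test function against the transition density, apply It\^o to produce the generator, and move $U_t$ across to $U_t^{\dagger}$ by integration by parts in the spatial variables. The only cosmetic differences are that the paper uses the time-dependent test functions $\ff\circ\Phi_t$ and phrases the conclusion as following from the backward equation (the preceding Lemma) rather than from It\^o directly, and your justification of the vanishing boundary terms (periodicity in $\qqb$, decay in $\ppb$, unit Jacobian of $\Phi_t$ from $\mathrm{tr}\,A=0$) is more explicit than anything the paper records.
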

\begin{proof}
Using the adjoint property, we have in the remapped Lagrangian and Eulerian domain respectively
\begin{align*}
\int_{\MM} \big(\partial_t+ {U}_{t}\big)&\big(\ff\circ\Phi_{t}\big)(\xxb) \psi(t,\xxb{\big|} {s}  ,\yyb)d\xxb \\
&=  { \int_{\MM}\big(\ff\circ\Phi_{t}\big)(\xxb) \big(-\partial_t+ {U}_{t}^{\dagger}\big)\psi ({t}  ,\xxb{\big|} {s}  ,\yyb) d\xxb   },
\end{align*}
and
\begin{align*}
 \int_{\LRdt}  \big(\partial_t+ \WH{U}_{t}\big)\ff(\xxh) \WH{\psi}(t,\xxh\big| s,\yyh)  d\xxh&=\int_{\LRdt}\ff(\xxh)  \big( -\partial_t+\WH{U}_t^{\dagger}\big) \WH{\psi} ({t}  ,\xxh\big| s,\yyh)  d\xxh.
\end{align*}
Using the previous Lemma, the forward Kolmogorov equation of the NELD~\eqref{forward-kolmo} follows.
\end{proof} 
Note that   the probability density of  $\XXh_t$ 
\begin{align*}
 {\nu}(t,\xxh )=\int_{\LRdt}\WH{\psi}(t,\xxh\big|s,\yyh)   {{\nu} }(s,\yyh )d\yyh =\int_{\MM}\psi(t,\xxb{\big|} {s}  ,\yyb) \big({\nu}\circ\Phi_{s}\big)(s,\yyb )d\yyb
\end{align*}
  satisfies the forward Kolmogorov equation, thus we denote   probability density   of  $\XXb_t$  by  
\begin{align*}
 \big({\nu}\circ\Phi_{t}\big)(t,\xxb ) =\int_{\MM}\psi(t,\xxb{\big|} {0}  ,\yyb)\nu_{0}(\yyb)d\yyb=\int_{\LRdt}\WH{\psi}(t,\xxh\big| 0,\yyb) {\nu}_{0}(\yyb)d\yyb,
\end{align*}
where   $\nu_{0}= \nu(0,\xxb ) $.
Moreover, the backward evolution  from  $\big({\phi}\circ\Phi_{t}\big)(t, \xxb)$ to $\big({\phi}\circ\Phi_{s}\big)( s,\yyb)$
satisfies 
\begin{align*}  
 \big({\phi}\circ\Phi_{s}\big)( s,\yyb)=&\int_{\MM} \big({\phi}\circ\Phi_{t}\big)( t,\xxb) {\psi}(t,\xxb {\big|} {s}   ,\yyb)d \xxb .
\end{align*}   
\subsection{Convergence of NELD to a Limit Cycle}\label{sectProposition} 
We show in this section that the Markov process $ \XXb_{t}  $ converges to a limit cycle in the remapped Lagrangian domain, and with  the following lemma that the convergence of $\XXb_t$ to a limit cycle implies the convergence of $\XXh_{t}$ to a limit cycle in the remapped Eulerian domain as well.
\begin{lemma}\label{bijection}
  Let us assume that the Markov process $ \XXb_{t}  $ 
 converges uniformly to a limit cycle $ {\psi}$ in the remapped Lagrangian domain and that $ {\psi}$ is smooth and positive.  Then  the Markov process $ \XXh_{t}  $  converges uniformly to a  probability density function  $\WH{\psi}$ in the remapped Eulerian domain. 
\end{lemma}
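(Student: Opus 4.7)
The plan is to transfer convergence directly via the explicit linear bijection $\Phi_t$ established in the preceding section. By the commutative diagram and identity \eqref{Linear-map}, we have $\XXh_t = \Phi_t(\XXb_t)$, where $\Phi_t = \mathrm{diag}(e^{[t]A}, e^{[t]A})$ is a block-diagonal linear map from $\MM$ to $\LRdt$. Since both planar flows under consideration are incompressible, $\mathrm{tr}(A) = 0$, hence $\det(e^{[t]A}) = 1$ and $|\det \Phi_t| = 1$; that is, $\Phi_t$ is a volume-preserving linear bijection. The standard change-of-variables formula for probability densities then yields the pointwise identity
\begin{align*}
\WH{\psi}(t, \xxh) \;=\; \psi\bigl(t,\, \Phi_t^{-1}(\xxh)\bigr),
\end{align*}
which is the only analytic content we need.

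Writing $t = kT + \theta$ with $\theta = [t] \in [0, T)$ and denoting the hypothesized Lagrangian limit cycle by $\psi_\infty(\theta, \xxb)$, the natural candidate for the Eulerian limit cycle is
\begin{align*}
\WH{\psi}_\infty(\theta, \xxh) \;:=\; \psi_\infty\bigl(\theta,\, \Phi_\theta^{-1}(\xxh)\bigr), \qquad \theta \in [0, T), \ \xxh \in \WH{\LL}_\theta^d \times \RRRd.
\end{align*}
I would check four properties in turn: $T$-periodicity in $\theta$ (inherited from the $T$-periodicity of $[t]$, hence of $\Phi_t$, together with the periodicity of $\psi_\infty$); the probability normalization $\int \WH{\psi}_\infty\,d\xxh = \int \psi_\infty\,d\xxb = 1$ by change of variables with unit Jacobian; smoothness, as a composition of smooth maps with the smooth hypothesis on $\psi_\infty$; and positivity, which follows from $\psi_\infty > 0$ and the bijectivity of $\Phi_\theta^{-1}$.

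To transfer uniform convergence, I would use that $\Phi_t^{-1}: \LRdt \to \MM$ is a bijection at each $t$, so for any bounded function $h$ on $\MM$,
\begin{align*}
\sup_{\xxh \in \LRdt} \bigl|h\bigl(\Phi_t^{-1}(\xxh)\bigr)\bigr| \;=\; \sup_{\xxb \in \MM} |h(\xxb)|.
\end{align*}
Applying this with $h(\xxb) = \psi(kT+\theta, \xxb) - \psi_\infty(\theta, \xxb)$ gives
\begin{align*}
\sup_{\xxh} \bigl|\WH{\psi}(kT+\theta, \xxh) - \WH{\psi}_\infty(\theta, \xxh)\bigr| \;=\; \sup_{\xxb} \bigl|\psi(kT+\theta, \xxb) - \psi_\infty(\theta, \xxb)\bigr| \longrightarrow 0
\end{align*}
as $k \to \infty$, uniformly in $\theta \in [0, T)$, by the hypothesized uniform convergence in the Lagrangian domain.

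The argument is essentially a bookkeeping exercise rather than a substantive analytic step; the only point requiring care is confirming that $\Phi_t$ restricts to a genuine bijection at the level of positions between $\mathcal{L}_0$ and $\WH{\LL}_t = e^{[t]A}\mathcal{L}_0$, which is immediate from the definition \eqref{domain1}. All the work was already done in assembling the commutative diagram and recognizing that incompressibility forces $\Phi_t$ to be measure-preserving.
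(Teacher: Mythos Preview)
Your proof is correct and follows essentially the same approach as the paper: both derive the identity $\WH{\psi} = \psi \circ \Phi_t^{-1}$ via change of variables through the linear bijection $\Phi_t$, then read off smoothness and positivity. Your version is more thorough—explicitly invoking incompressibility for the unit Jacobian and spelling out the sup-norm transfer—whereas the paper's three-line argument leaves these points implicit.
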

\begin{proof} 
Since the density ${\psi} $  is   smooth and positive by assumption, we have
\begin{align*}  
 \EEys{(\ff\circ\Phi_{t})(\XXb_{{t}  })}=&\int_{\MM}(\ff\circ\Phi_{t})(\xxb)  {\psi}(t,\xxb {\big|} {s}   ,\yyb) d \xxb \\
 =&\int_{\LRdt}\ff(\xxh)( {\psi} \circ \Phi_{t}^{-1})(t,\xxh {\big|} {s}   ,\yyh)d \xxh   \\
 =&\int_{\LRdt}\ff(\xxh) \WH{\psi}(t,\xxh {\big|} {s}   ,\yyh) d \xxh ,
\end{align*}
where $\WH{\psi}= {\psi} \circ \Phi_{t}^{-1}$.
In addition, $ \WH{\psi}(t,\xxh {\big|} \cdot)  $ is smooth and positive as ${\psi}(t,\xxb {\big|} \cdot) $ is. 
\end{proof}

Now, let us consider $\xxb_{k}=(Q_{k}=\qqb_{kT},P_{k}=\ppb_{kT})\in\MM$, so that  $(\xxb_{k})_{k\geq 0 }$ denotes a discrete  Markov chain constructed from the particle coordinates at the start of each each period, where   $\xxb_0=(Q_{0},P_{0})$ is the initial coordinate of the time-inhomogeneous process $\xxb_{t}=({\qqb}_{t},{\ppb}_{t})$. Then, we  define 
\begin{align*}
( \mathcal{U}_{T}\ff)(\qqb,{\ppb})=\mathbb{E}\Big(\ff(Q_{k+1},P_{k+1})|(Q_{k},P_{k})=(\qqb,{\ppb})\Big),
\end{align*}
the discrete generator of the Markov chain. 
   In addition, we consider the Lyapunov function 
\begin{align}\label{Lyapunov}
{\KK}_n({\qqb},{\ppb})=1+\norm{\ppb}^{2 n}, n\geq 1 
\end{align}
with the associated weighted $L^{\infty}$ norms  defined by
\begin{align*}
\norm{\mathbf{g}}_{ L_{{\KK}_n}^{\infty}}=\norm{\frac{\mathbf{g}}{{\KK}_n}}_{L^{\infty}}, \quad \mathbf{g}({\qqb}, {\ppb}) \in \MM,
\end{align*}
and the corresponding $\norm{\mathbf{g}}_{L^{\infty}( L_{{\KK}_n}^{\infty})}$ norms defined by
\begin{align*}
\norm{\mathbf{g}}_{L^{\infty}( L_{{\KK}_n}^{\infty})}=\sup_{\theta\in \toro} \normLK{ \mathbf{g}(\theta)}, \quad  \mathbf{g}(\theta, {\qqb}, {\ppb}) \in \toro  \times\MM .
\end{align*}
In the remainder of this section, we prove the following propositions:
\begin{proposition}{(Uniform convergence to a limit cycle)}
\label{prop:converge}
For $n > 1$, there exists a unique
probability measure $ \big(\nu\circ\Phi_{\theta}\big)(\theta,{\qqb}, {\ppb})$ on $ \toro  \times\MM $
and constants $C_n, \lambda_n$  such that, for any initial distribution $({\qq}_0, {\pp}_0)$, we have:
\begin{enumerate}
\item  \label{unique}Exponential convergence
\begin{align*}
\forall \ff \in L^{\infty}( L_{{\KK}_n}^{\infty}), \quad \norml{ \mathbb{E}\Big((\ff\circ\Phi_{t})( {\qqb}_{t},\ppb_{t})\Big)-\WB{\ff}([t])}\leq C_n e^{-\lambda_n t }\norm{\ff-\WB{\ff}([t])}_{ L^{\infty}( L_{{\KK}_n}^{\infty})},
\end{align*}
where, for $\theta \in\toro$, the spatial average of $\ff$ reads
\begin{align}\label{limit-cycle}
\WB{\ff}(\theta)=\int_{ \MM } \big(\ff\circ\Phi_{\theta}\big)( {\qqb},{\ppb})\big(\nu\circ\Phi_{\theta}\big)(\theta,{\qqb},{\ppb}) d{\qqb}d{\ppb}.
\end{align}
\item \label{smooth}The invariant distribution $ \big(\nu\circ\Phi_{\theta}\big)(\theta,{\qqb}, {\ppb})$ is smooth, positive,   and satisfies the Fokker-Planck equation
\begin{align*}
(-\partial_{{\theta}  }+{U}^{\dagger}_{\theta})\big(\nu\circ\Phi_{\theta}\big) =0, \quad \int_{ \toro  \times\MM}\big(\nu\circ\Phi_{\theta}\big)(\theta,{\qqb},{\ppb}) d\theta d{\qqb}d{\ppb} =T .
\end{align*}
\item \label{bound}The invariant distribution $ \big(\nu\circ\Phi_{\theta}\big)(\theta,{\qqb}, {\ppb})$ has finite moments of order $2n$ uniform in the time variable
\begin{align*}
\theta \in \toro, \quad  \int_{\MM }{\KK}_n( {\qqb},{\ppb})\big(\nu\circ\Phi_{\theta}\big)(\theta,{\qqb},{\ppb}) d{\qqb}d{\ppb}\leq R_n <\infty
\end{align*}
and has uniform marginals in the time variable:
\begin{align*}
\WB{\nu}(\theta )=\int_{\MM }  \big(\nu\circ\Phi_{\theta}\big)(\theta,{\qqb},{\ppb}) d{\qqb}d{\ppb}=1.
\end{align*}
\end{enumerate}
\end{proposition}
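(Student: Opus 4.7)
The strategy follows \cite{Joubaud} closely, reducing the time-inhomogeneous continuous-time problem to a time-homogeneous question about the period-$T$ discrete chain $(\xxb_k)_{k \geq 0}$ with one-step operator $\mathcal{U}_T$. The plan is to verify the hypotheses of a Harris-type theorem for $\mathcal{U}_T$ on the weighted space $L^\infty_{\KK_n}$, then to transport the resulting geometric ergodicity back to the continuous-time dynamics.

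First I would establish a Lyapunov drift estimate of the form $\mathcal{U}_T \KK_n \leq \alpha_n \KK_n + \beta_n$ with $\alpha_n \in (0,1)$ and $\beta_n < \infty$. Applying It\^o's formula to $\KK_n$ along \eqref{NELD-22-3}, one finds that the non-dissipative part produced by $\Gamma = \gamma + A$ is controlled by the scalar damping $\gamma$ provided $\gamma$ dominates the spectral abscissa of the symmetric part of $A$ (this holds for LE shear, where $A$ is nilpotent, and for KR PEF up to a standard constraint on $\gamma$ vs.\ $\epsilon$); the $\nabla V$ term is bounded by the standing assumption on $\nabla V$, and $e^{-\theta A}$ is uniformly bounded on $[0,T)$. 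Integrating the resulting bound in $\theta$ over one period gives the desired geometric drift for $\mathcal{U}_T$ on $\KK_n$. This simultaneously yields the uniform moment bound in item~\eqref{bound}.

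Next I would establish a minorization (small-set) condition: for any compact sublevel set $C = \{\KK_n \leq R\}$, there exist $\varepsilon > 0$ and a probability measure $\mu$ on $\MM$ such that $\mathcal{U}_T(\xxb, \cdot) \geq \varepsilon \mu(\cdot)$ for all $\xxb \in C$. This is the hard step: the coefficients of \eqref{NELD-22-3} are $\theta$-dependent through $e^{\pm \theta A}$, so the generator $U_\theta$ is hypoelliptic with a diffusion that degenerates in the $\qqb$ direction and is rotated in time. One controls this via a H\"ormander bracket computation: $[\partial_{\ppb}, \langle \ppb, \nabla_{\qqb}\rangle] = \partial_{\qqb}$ produces the missing direction, so the transition density $\psi(T,\cdot\,|\,0,\cdot)$ is smooth and strictly positive by the Stroock-Varadhan support theorem applied to the piecewise-frozen control problem on $[0,T]$; continuity of this density on $C \times C$ yields the minorization.

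Combining the drift and minorization, Harris' theorem (in the Hairer-Mattingly formulation suited to $L^\infty_{\KK_n}$) delivers a unique invariant probability $\pi_0$ for $\mathcal{U}_T$, geometric convergence in the $L^\infty_{\KK_n}$-norm at some rate $e^{-\lambda_n T}$, and the bound on $\ff - \WB\ff$. Finally I would define the limit cycle by propagating $\pi_0$ through the time-inhomogeneous semigroup over one period:
\begin{equation*}
(\nu \circ \Phi_\theta)(\theta, \cdot) := \int_{\MM} \psi(\theta, \cdot \,|\, 0, \yyb)\, \pi_0(d\yyb), \qquad \theta \in [0,T),
\end{equation*}
and verify it satisfies the stated Fokker-Planck equation by differentiating in $\theta$ using Lemma~\ref{forward-kolmo-lemma}. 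Smoothness and positivity of $(\nu \circ \Phi_\theta)$ then follow from the hypoellipticity already invoked in the minorization step, while the uniform marginal and moment bounds propagate from $\pi_0$ since the drift estimate is in fact valid on each subinterval $[0, \theta]$. The exponential estimate in item~\eqref{unique} for arbitrary $t = kT + \theta$ is obtained by splitting $\mathcal{U}_t = \mathcal{U}_\theta \mathcal{U}_T^k$ and using boundedness of $\mathcal{U}_\theta$ on $L^\infty_{\KK_n}$. The bracket-and-support argument for the minorization is the principal technical obstacle; the Lyapunov and transport steps are essentially routine once the quantitative damping of $\Gamma$ is made precise.
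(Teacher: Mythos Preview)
Your overall architecture matches the paper's: reduce to the period-$T$ chain, verify a Lyapunov drift and a minorization, invoke the Hairer--Mattingly version of Harris' theorem for $\mathcal{U}_T$, then push the geometric ergodicity back to continuous time by splitting $t = kT + \theta$ and controlling $\mathbb{E}[\KK_n(\xxb_\theta)]$. The minorization argument (hypoellipticity via H\"ormander brackets plus a controllability/support argument for positivity of the transition density) is essentially what the paper does as well, and the construction of the limit cycle by propagating the discrete invariant law over $[0,\theta]$ and invoking Lemma~\ref{forward-kolmo-lemma} is the same.

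The one substantive deviation is your Lyapunov step. Applying It\^o to $\KK_n$ along~\eqref{NELD-22-3} produces the drift term $-\langle \ppb,\Gamma\ppb\rangle$, and to get contraction you need $\gamma I + \mathrm{sym}(A)$ positive definite. That is \emph{not} automatic for shear: even though $A$ is nilpotent, $\mathrm{sym}(A)$ has eigenvalues $\pm\epsilon/2$, so your condition reads $\gamma > |\epsilon|/2$ (and $\gamma > |\epsilon|$ for PEF). The paper sidesteps this constraint with a sharper, more algebraic argument: multiply the $\ppb$-equation by the integrating factor $e^{\Gamma\theta}$, integrate over one period to obtain $P_{k+1}^- = e^{-\Gamma T}(P_k + \mathscr{F}_k + \mathscr{G}_k)$, and then apply the momentum remapping $P_{k+1} = e^{TA}P_{k+1}^-$. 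Since $A$ commutes with $\gamma I$, one has $e^{TA}e^{-\Gamma T} = e^{-\gamma T}$, so the $A$-contribution cancels exactly and the contraction factor is the pure scalar $e^{-\gamma T}$, valid for \emph{any} $\gamma>0$ and any $\epsilon$. Your approach is not wrong under the added hypothesis, but the paper's integrating-factor-plus-remap trick is what removes the spurious restriction on $\gamma$ versus $\epsilon$; it is the one non-routine idea in the Lyapunov part and is worth adopting.
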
 
The proof of Proposition~\ref{prop:converge} is completed once we show  the {smoothness and positivity of the transition probability} (Section~\ref{smoothness}), the uniformity of the Lyapunov condition and the uniform minorization conditions of the generator of the Markov chain (Section~\ref{existence}).
Then, we derive the  convergence in the Law of Large Numbers of the  Markov chain  in Section~\ref{Convergence}.

Using the above result, we derive the   following Proposition: 
 \begin{proposition}\label{convergence lln}
Let us consider $\ff\in {L^{\infty}( L_{{\KK}_n}^{\infty})}$. From all initial positions,  we have
\begin{align}\label{lln-cont}
\frac{1}{t}\int_0^t (\ff\circ\Phi_{s})({\qqb_s},{\ppb_s})ds\xrightarrow[{t\to +\infty}]{} \int_{\MM}(\ff\circ\Phi_{t})({\qqb},{\ppb})(\nu\circ\Phi_{t})(t,\qqb,\ppb ) d{\qqb}d{\ppb}  \quad \textrm{a,s.}
.
\end{align}
 \end{proposition}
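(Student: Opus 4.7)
The plan is to reduce this continuous-time, time-inhomogeneous law of large numbers to a discrete ergodic theorem for the $T$-skeleton chain $(Q_k, P_k) := (\qqb_{kT}, \ppb_{kT})$. Because the drift and diffusion in the remapped equation~\eqref{NELD-22-3} are $T$-periodic in the explicit time variable, the skeleton chain is time-homogeneous, and Proposition~\ref{prop:converge} supplies both its geometric ergodicity, with invariant law $\pi := (\nu\circ\Phi_0)(0,\cdot)$, and the uniform Lyapunov-moment bounds needed to control continuous-time excursions over a single period.

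Writing $t = KT + r$ with $K = \lfloor t/T \rfloor$ and $r \in [0,T)$, I would decompose
\begin{align*}
\frac{1}{t}\int_0^t (\ff \circ \Phi_s)(\qqb_s, \ppb_s)\, ds \;=\; \frac{K}{t}\cdot \frac{1}{K}\sum_{k=0}^{K-1} F_k \;+\; \mathcal{E}_t, \qquad F_k := \int_{kT}^{(k+1)T}(\ff \circ \Phi_s)(\qqb_s, \ppb_s)\, ds.
\end{align*}
The tail $\mathcal{E}_t$ is $o(1)$ almost surely because an It\^o estimate on $\KK_n$ over one period, combined with the uniform moment bound of Proposition~\ref{prop:converge} and Borel--Cantelli, yields $t^{-1}\sup_{s\in[KT,(K+1)T]}\KK_n(\qqb_s,\ppb_s) \to 0$. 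Next, by the strong Markov property and the $T$-periodicity of~\eqref{NELD-22-3}, I split $F_k = g(Q_k, P_k) + M_k$, where
\begin{align*}
g(\qqb,\ppb) := \mathbb{E}^{0,(\qqb,\ppb)}\!\left[\int_0^T (\ff \circ \Phi_\theta)(\qqb_\theta, \ppb_\theta)\, d\theta\right],
\end{align*}
is a time-independent function on $\MM$ and $M_k = F_k - g(Q_k, P_k)$ is a martingale difference with respect to the natural filtration $(\mathcal{F}_{kT})_{k\ge 0}$.

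It\^o's formula applied to $\KK_n$, together with the boundedness of $\nabla V$, gives uniform-in-$k$ conditional second-moment bounds of the form $\mathbb{E}[M_k^2\mid \mathcal{F}_{kT}] \le C\, \KK_n(Q_k, P_k)$, after which the geometric ergodicity of the skeleton chain and a martingale strong law give $K^{-1}\sum_{k=0}^{K-1} M_k \to 0$ almost surely. Meanwhile $g \in L^\infty_{\KK_n}$ by construction, and the standard ergodic theorem for geometrically ergodic chains (applied to the truncations $g \wedge R$ and letting $R\to\infty$ via the $\pi$-integrability of $\KK_n$) yields
\begin{align*}
\frac{1}{K}\sum_{k=0}^{K-1} g(Q_k, P_k) \xrightarrow[K\to\infty]{\text{a.s.}} \int_\MM g \, d\pi = \int_0^T \!\!\int_\MM (\ff \circ \Phi_\theta)(\qqb,\ppb)\,(\nu\circ\Phi_\theta)(\theta, \qqb, \ppb)\, d\qqb\, d\ppb\, d\theta,
\end{align*}
the last equality coming from Fubini and the identification of $(\nu\circ\Phi_\theta)(\theta,\cdot)$ as the law at sub-period time $\theta$ of the Lagrangian process started from $\pi$, itself a consequence of the uniqueness in Proposition~\ref{prop:converge}. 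Since $K/t \to 1/T$, this matches the right-hand side of~\eqref{lln-cont}.

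The main obstacle is that both the ergodic theorem for the skeleton chain and the martingale strong law must be run for unbounded observables lying only in the weighted space $L^\infty_{\KK_n}$, rather than being globally bounded. The remedy in both cases is truncation combined with the uniform moment estimate of Proposition~\ref{prop:converge}: for the ergodic part one controls the residual via $\mathbb{E}_\pi[\KK_n \mathbf{1}_{\{\KK_n > R\}}] \to 0$, while for the martingale part the summability $\sum_k k^{-2}\mathbb{E}[M_k^2]<\infty$ follows from $\sup_k \mathbb{E}[\KK_n(Q_k, P_k)] < \infty$. Checking that a single-period integral of $\KK_n$ inherits the required $\KK_n$-bound, so that $g$ itself is $\KK_n$-dominated and the conditional moment bound for $M_k$ is genuinely uniform, is where the hypothesis $n>1$ of Proposition~\ref{prop:converge} is used in an essential way.
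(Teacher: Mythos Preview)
Your approach is sound and takes a genuinely different route from the paper. The paper's proof is essentially a two-line citation: having already established smoothness, positivity, and irreducibility of the transition kernel (Corollary~\ref{smooth-density}, Lemma~\ref{positive-density}) together with exponential convergence (Proposition~\ref{prop:converge}), it observes that the process is positive Harris recurrent via \cite[Corollary~1]{Tierney} and \cite{meyn}, and then invokes \cite[Theorem~17.0.1]{meyn} directly to conclude the law of large numbers. Your argument, by contrast, is self-contained: you decompose the time integral into period-length blocks, split each block integral $F_k$ into its conditional mean $g(Q_k,P_k)$ plus a martingale increment, apply the discrete ergodic theorem for the $T$-skeleton to the $g$-part, and dispose of the martingale part via a strong law driven by the uniform $\KK_n$-moment bounds. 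The paper's route is shorter but outsources all the content to the Meyn--Tweedie machinery; yours exposes the mechanism explicitly and, in particular, arrives naturally at the time-averaged limit $T^{-1}\int_0^T\WB{\ff}(\theta)\,d\theta$, which is what the right-hand side of~\eqref{lln-cont} must mean once the residual $t$-dependence there is unpacked. One minor slip: your conditional second-moment bound $\mathbb{E}[M_k^2\mid\mathcal{F}_{kT}]\le C\,\KK_n(Q_k,P_k)$ really needs $\KK_{2n}$-type control, since $F_k^2$ is dominated by a period integral of $\KK_n^2\sim\KK_{2n}$; but the Lyapunov structure of Lemma~\ref{Lyap} holds for every order, so this is harmless.
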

 \subsubsection{Smoothness and positivity of the transition probability}\label{smoothness}
First, we show the smoothness of the transition kernel $\nu_{t}(\cdot) $  in  the remapped Eulerian coordinates using  \cite[Lemma 22.2.5]{hormander}, then we show its positivity using \cite{luc-ergo}. 

 Let $ H_{s}^{loc}$ denote the local Sobolev
space of index $s$.
We will use the following Lemma:
  \begin{lemma}\cite[Lemma 22.2.5]{hormander} \label{horm}
   If ${\WH{U}^{\dagger}_t}$ is hypoelliptic then
\begin{align*}
     {\WH{U}^{\dagger}_t}   {g}&= {h}  \textrm{ and } {h} \in H_{s}^{loc}  \textrm{ at  } (\xxh,\yyh)\in (\LRdt,\LRdt) \implies   {g} \in H_{s+\epsilon}^{loc} \textrm{ at  } (\xxh,\yyh).
\end{align*}
 \end{lemma}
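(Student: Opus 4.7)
The plan is to invoke Hörmander's classical subelliptic regularity theory, since the statement is literally Lemma 22.2.5 of that reference; the point of the proposal is to outline how the abstract hypoellipticity hypothesis on $\WH{U}^{\dagger}_t$ is converted into the quantitative Sobolev gain that we will use subsequently.

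First I would unfold $\WH{U}^{\dagger}_t$ into Hörmander form, writing it as $X_0 + \sum_{i=1}^{d} X_i^2 + c_0$, where $X_0$ is a first-order drift collecting the Liouville terms (those coming from $\pph + A\qqh$, $-\nabla V(\qqh)$, and $-\gamma\pph$), the vector fields $X_i = (\sigma/\sqrt{2})\,\partial_{p_i}$ carry the diffusion, and $c_0$ is the zeroth-order remainder produced by formal adjoining. I would then appeal to the subelliptic a priori estimate that is the heart of Hörmander's treatment: provided the iterated Lie brackets of $X_0, X_1, \ldots, X_d$ span the tangent space at every point, with brackets of depth at most $r$, there exist $\epsilon = 1/r$ and a constant $C$ such that for every compactly supported $u$,
\begin{equation*}
\|u\|_{H^{s+\epsilon}} \leq C\bigl(\|\WH{U}^{\dagger}_t u\|_{H^{s}} + \|u\|_{H^{s}}\bigr).
\end{equation*}

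To pass from this global inequality to the pointwise regularity claim at $(\xxh, \yyh)$, I would localize by multiplying $g$ by a smooth cutoff $\chi$ supported in a small neighborhood of that point. Because the commutator $[\WH{U}^{\dagger}_t, \chi]$ is a first-order differential operator, one obtains $\WH{U}^{\dagger}_t(\chi g) = \chi h + [\WH{U}^{\dagger}_t, \chi]\, g$; the first term lies in $H^{s}_{\mathrm{loc}}$ by hypothesis and the second is handled by a bootstrap starting from the baseline distributional regularity already guaranteed by the standing assumption of hypoellipticity. Feeding this back into the subelliptic estimate for $\chi g$ and iterating produces $\chi g \in H^{s+\epsilon}$, and since $\chi$ is arbitrary we conclude $g \in H^{s+\epsilon}_{\mathrm{loc}}$ at $(\xxh,\yyh)$.

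The main obstacle is the subelliptic a priori estimate itself, whose proof rests on a pseudodifferential parametrix construction and microlocal arguments whose reproduction would take us well outside the scope of the present paper; I would therefore defer that step entirely to the Hörmander reference. The only piece genuinely specific to our setting that still needs to be checked by hand is the Hörmander bracket condition for the concrete operator $\WH{U}^{\dagger}_t$ at hand, and here the standard Langevin calculation $[X_0, \partial_{p_i}] = -\partial_{q_i} + \text{(lower-order terms)}$ shows that all position derivatives are recovered after a single commutator, so one may take $r=2$ and $\epsilon = 1/2$ in the present application.
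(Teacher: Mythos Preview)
The paper offers no proof of this lemma at all: it is stated purely as a citation of H\"ormander's Lemma 22.2.5 and used as a black box. Your proposal therefore goes well beyond the paper, supplying a correct high-level outline of how H\"ormander's subelliptic estimate is obtained and then localized; you even explicitly acknowledge deferring the core parametrix construction to the reference, which is exactly what the paper does (only more tersely). The one piece you flag as ``genuinely specific to our setting''---verifying the bracket condition for $\WH{U}^{\dagger}_t$ via $[X_0,\partial_{p_i}]$---is not part of this lemma in the paper's organization but is carried out separately in the subsequent lemma, so your inclusion of it here is harmless but slightly misplaced relative to the paper's structure.
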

Then it follows that: 
 \begin{corollary}\label{smooth-density}
  Let us assume that $ {\WH{U}^{\dagger}_t}$ is hypoelliptic and   that there exists $\nu_{t} $ such that
\begin{align*}
 \big( -\partial_{t} +\WH{U}^{\dagger}_t\big) \nu_{t}  =0 .
\end{align*}  
  Then $\nu_{t}(\cdot) \in C^{\infty}$ . 
 \end{corollary}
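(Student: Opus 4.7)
The plan is to apply Hörmander's Lemma 22.2.5 to the equation $(-\partial_t + \widehat{U}_t^\dagger)\nu_t = 0$, treating $-\partial_t + \widehat{U}_t^\dagger$ as a single hypoelliptic operator on the extended space $\RR_t \times \LRdt$. This is the standard device for parabolic-type equations: although the hypothesis of the corollary formally states hypoellipticity of $\widehat{U}_t^\dagger$ alone, the Langevin generator in position-momentum coordinates, augmented by $-\partial_t$, satisfies Hörmander's bracket condition because $\nabla_{\ppb}$ (from the diffusion) together with iterated commutators against the drift $\pp \cdot \nabla_{\qq}$ spans the full tangent space, and $\partial_t$ supplies the missing time direction trivially.

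First I would observe that the right-hand side of the equation is $h \equiv 0$, which lies in $H_s^{\mathrm{loc}}$ for every $s \in \RR$. Invoking Lemma~\ref{horm} once then yields $\nu_t \in H_{s+\epsilon}^{\mathrm{loc}}$ at every point $(\xxh,\yyh)$ for some fixed $\epsilon > 0$. Bootstrapping (iterating the lemma), one gets $\nu_t \in H_{s+k\epsilon}^{\mathrm{loc}}$ for every positive integer $k$, so $\nu_t$ belongs to $H_r^{\mathrm{loc}}$ for all $r \in \RR$. The Sobolev embedding theorem then gives $\nu_t \in C^\infty$ locally, which is the desired conclusion.

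The only nontrivial point is the reinterpretation of the hypothesis: the corollary is stated with hypoellipticity of $\widehat{U}_t^\dagger$, but the equation it applies to contains a $-\partial_t$ term. I would handle this by verifying, or citing, that the parabolic extension $-\partial_t + \widehat{U}_t^\dagger$ inherits hypoellipticity from $\widehat{U}_t^\dagger$ in the sense needed by Hörmander's lemma; for kinetic Fokker–Planck operators of Langevin type this is classical (see for instance the treatment in \cite{luc-ergo}). Everything else is a direct application of the cited lemma together with Sobolev embedding, so the proof reduces to a one-line invocation once the hypoellipticity framework is in place.
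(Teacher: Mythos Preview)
Your proposal is correct and follows the same approach as the paper, which simply invokes Lemma~\ref{horm} in a single sentence. Your version is in fact more careful: you spell out the bootstrapping and Sobolev-embedding steps and flag the subtlety about the $-\partial_t$ term, all of which the paper's one-line proof leaves implicit.
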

\begin{proof} 
We observe that if ${\WH{U}^{\dagger}_t} $ is hypoelliptic then, 
  Lemma~\ref{horm} shows that $\nu_{t}(\cdot)  \in C^{\infty}$. 
\end{proof}
We finish the first part of the section by showing that $\WH{U}_t$ is hypoelliptic in the following Lemma:
\begin{lemma}
$\WH{U}_t, \WH{U}^{\dagger}_t$ are hypoelliptic.
\end{lemma}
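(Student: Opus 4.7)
The plan is to verify H\"ormander's hypoellipticity criterion for $\WH{U}_t$ directly: this is a standard Langevin-type generator, degenerate only because the noise acts on the momentum variables, and a single bracket with the drift recovers the missing configuration directions. Having done so for $\WH{U}_t$, I will observe that the formal adjoint has the same principal symbol and a drift differing by a sign (plus lower-order terms that do not affect the bracket condition), so H\"ormander's theorem applies again.

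Concretely, since the diffusion coefficient $\sigma$ is scalar so that $\sigma\sigma^T=\sigma^2 I_{3d}$, I would write $\WH{U}_t$ in the ``sum of squares plus drift'' form
\begin{align*}
\WH{U}_t = X_0 + \sum_{j=1}^{3d} X_j^{\,2}, \qquad
X_j=\tfrac{\sigma}{\sqrt 2}\,\partial_{\pph_j},
\end{align*}
with the smooth drift vector field
\begin{align*}
X_0=\langle\pph+A\qqh,\nabla_{\qqh}\rangle-\langle\nabla V(\qqh),\nabla_{\pph}\rangle-\gamma\langle\pph,\nabla_{\pph}\rangle.
\end{align*}
A direct computation (the second-order terms cancel by symmetry of mixed partials) yields
\begin{align*}
[X_0,X_j] = -\tfrac{\sigma}{\sqrt{2}}\,\partial_{\qqh_j} + \tfrac{\sigma\gamma}{\sqrt{2}}\,\partial_{\pph_j},
\end{align*}
so subtracting $\gamma X_j$ from $[X_0,X_j]$ produces $-\tfrac{\sigma}{\sqrt 2}\partial_{\qqh_j}$. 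Thus the Lie algebra generated by $\{X_0,X_1,\dots,X_{3d}\}$ contains both $\partial_{\qqh_j}$ and $\partial_{\pph_j}$ for each $j$, and therefore spans the tangent space at every point of $\WH{\LL}_t^d\times\RRRd$. H\"ormander's theorem then gives $C^\infty$-hypoellipticity of $\WH{U}_t$.

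For the adjoint, one checks that $\WH{U}_t^{\dagger}$ has the same principal part $\tfrac12\sigma^2\Delta_{\pph}$, while the first-order drift is $-X_0$ modified by a constant (coming from $\mathrm{div}$ of the linear drift), together with an added zeroth-order multiplication operator $c(\qqh,\pph)=-\Delta V(\qqh)/\ldots$ etc. Setting $Y_j=X_j$ and $Y_0=-X_0+\text{constant}$, the same commutator computation gives $[Y_0,Y_j]=\tfrac{\sigma}{\sqrt 2}\partial_{\qqh_j}-\tfrac{\sigma\gamma}{\sqrt 2}\partial_{\pph_j}$, and multiplication by a smooth function leaves the brackets untouched, so H\"ormander's condition holds for $\WH{U}_t^{\dagger}$ as well. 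The main (and really only) obstacle is the Lie-bracket bookkeeping that shows the $\partial_{\qqh_j}$ directions are recovered in one bracket; everything else is textbook application of H\"ormander's theorem, and the affine term $A\qqh$ in the drift plays no special role since only the $\partial_{\pph_j}$-derivative of the $\pph$-part of the drift matters.
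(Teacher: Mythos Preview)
Your proposal is correct and follows essentially the same route as the paper: write $\WH{U}_t$ in H\"ormander form $X_0+\sum X_j^2$ with $X_j\propto\partial_{\pph_j}$, compute a single bracket $[X_0,X_j]$ to recover the $\partial_{\qqh_j}$ directions, and invoke H\"ormander's theorem; the adjoint is handled by the same bracket computation since only the sign of the drift changes and zeroth-order terms are irrelevant. Your bracket computation and your treatment of $\WH{U}_t^{\dagger}$ are in fact more explicit and more carefully justified than what appears in the paper.
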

\begin{proof}
We rewrite the generator of the NELD in   the  time-periodic domain as follows:
 \begin{align*}
  {\WH{U}_t}= \WH{\mathcal{X}}_0 +\frac{1}{2}\sum_{i=1}^d \WH{\mathcal{X}}_i,  \textrm{ for } 1\leq i\leq d,   \textrm{ and }
\end{align*}   
\begin{align*}
 \WH{\mathcal{X}}_0 =\left\langle\pph+A{\qqh}, \nabla_{\qqh}\right\rangle+\left\langle-\nabla {V}(\qqh)   ,\nabla_{\pph}\right\rangle-\gamma\left\langle\pph, \nabla_{\pph}\right\rangle, \  \WH{\mathcal{X}}_i=\sqrt{\frac{2\gamma}{\beta}}\partial_{\pph_i}.
\end{align*}
Then, we define $\mathfrak{L}( \WH{\mathcal{X}}_0,\dots, \WH{\mathcal{X}}_d)$, the  Lie algebra of the family of the vectorial space operators $( \WH{\mathcal{X}}_0,\dots, \WH{\mathcal{X}}_d) \in \textrm{Span}( \WH{\mathcal{X}}_0,\dots, \WH{\mathcal{X}}_d)$   satisfying  the stability property:
   \begin{align*}
    {B}\in \mathfrak{L}( \WH{\mathcal{X}}_0,\dots, \WH{\mathcal{X}}_d)\implies [{B}, \WH{\mathcal{X}}_i]\in \mathfrak{L}( \WH{\mathcal{X}}_0,\dots, \WH{\mathcal{X}}_d), \quad i=0,\dots,d,
\end{align*}
     where the Lie bracket between two operators $\mathscr{C}$ and $\mathscr{D}$ is 
     \begin{align*}
     [\mathscr{C}, \mathscr{D}]=\mathscr{C} \mathscr{D}-\mathscr{D}\mathscr{C}.
\end{align*}
     Since for every   point $(\qqh,\pph)\in \LRdt$, we have   
      \begin{align*} 
       [ \WH{\mathcal{X}}_i, \WH{\mathcal{X}}_0]&=\sqrt{\frac{\gamma}{2\beta}}(\partial_{\qqh_i}+\gamma) \partial_{\pph_i}, \quad \forall i\in \{1\dots d\},
  \end{align*}   
     evaluated at $(\qqh_0,\pph_0)$ span $\RRRd$, it follows that $\WH{U}_t$ and $\WH{U}^{\dagger}_t$ are hypoelliptic using \cite[Theorem 1.1]{hormander}. Then it follows that  $\WH{U}_t$ and $\WH{U}^{\dagger}_t$ are hypoelliptic as well.
\end{proof}

Now let us prove that the generator in the remapped Lagrangian coordinates has a positive probability density. We consider the following Lemma:
 \begin{lemma} 
 \label{positive-density}
${U}_t$ has a positive transition kernel.
\end{lemma}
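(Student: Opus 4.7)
The plan is to invoke the Stroock--Varadhan support theorem, which identifies the closed support of the transition kernel of a hypoelliptic SDE with the closure of the reachable set of the associated deterministic controlled trajectories. Since the preceding lemma already established hypoellipticity (and by the change of variables $\Phi_t$ the Lagrangian generator $U_t^{\dagger}$ is hypoelliptic as well, so the density is $C^\infty$), strict positivity will reduce to exact controllability of the associated control system.

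Concretely, I would first write down the control system associated with~\eqref{SDE}: replacing the Brownian increment by a smooth control $u:[s,t]\to\RRRd$, the controlled trajectory $(q^u,p^u)$ satisfies
\begin{align*}
\dot q &= p, \\
\dot p &= -e^{-\tau A}\nabla V\bigl(e^{\tau A} q\bigr) - \Gamma\, p + \sigma\, e^{-\tau A} u(\tau),
\end{align*}
with $(q^u(s),p^u(s))=(q_0,p_0)$. The time-inhomogeneous version of the support theorem (in the spirit of the references already invoked in the paper, e.g.~\cite{luc-ergo}) then identifies the topological support of $\psi(t,\cdot\,|\,s,(q_0,p_0))$ with the closure of $\{(q^u(t),p^u(t)):u\in C^\infty\}$.

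Next, I would verify exact controllability by an explicit construction. Given any target $(q_1,p_1)\in\MM$, choose a smooth curve $q^*:[s,t]\to\RRRd$ (for instance a cubic Hermite interpolant constructed on the universal cover of the flat torus $\mathcal{L}_0^d$ and then projected) with boundary data $q^*(s)=q_0$, $\dot q^*(s)=p_0$, $q^*(t)=q_1$, $\dot q^*(t)=p_1$. Setting $p^*:=\dot q^*$ and
\begin{align*}
u(\tau) := \frac{1}{\sigma}\, e^{\tau A}\Bigl[\ddot q^*(\tau) + e^{-\tau A}\nabla V\bigl(e^{\tau A} q^*(\tau)\bigr) + \Gamma\, \dot q^*(\tau)\Bigr],
\end{align*}
which is well defined because $\sigma>0$ and $e^{\tau A}$ is always invertible, one checks that $(q^*,p^*)$ solves the controlled ODE and reaches $(q_1,p_1)$ at time $t$. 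Thus the reachable set is all of $\MM$, the topological support of $\psi(t,\cdot\,|\,s,\cdot)$ is $\MM$, and a nonnegative $C^\infty$ function whose support is the whole space must be strictly positive.

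The main obstacle I anticipate is the careful invocation of the support theorem in the non-autonomous setting: the coefficient $\sigma e^{-\tau A}$ depends explicitly on $\tau$, so one must either cite a precise time-dependent version of Stroock--Varadhan or autonomize the system by appending time as a state variable and checking that hypoellipticity is preserved. A smaller subtlety is that the position domain is the flat torus $\mathcal{L}_0^d$ rather than $\RRRd$; this causes no genuine obstruction since the interpolating path is built in the universal cover and projected down, and any lift of $q_1$ yields the same conclusion once one takes closures.
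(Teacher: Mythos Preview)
Your proposal is correct and follows essentially the same route as the paper: both arguments write down the deterministic control system obtained by replacing the noise with a smooth control, exhibit an explicit $C^2$ interpolating path matching the prescribed endpoint data $(\qqb_0,\ppb_0)$ and $(\qqb_t,\ppb_t)$, solve for the control that makes this path a solution, conclude that the reachable set is all of $\MM$, and then combine full support with the $C^\infty$ regularity of the density to obtain strict positivity (the paper cites \cite[Corollary~6.2]{luc-ergo} for this last step, which packages exactly the Stroock--Varadhan support argument you invoke). Your additional remarks about the time-inhomogeneous version of the support theorem and about building the interpolant on the universal cover of the torus are valid refinements that the paper leaves implicit.
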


\begin{proof}
For $t>0$ and two points $({\qqb}_0,\ppb_0)$ and $({\qqb}_{t} ,\ppb_{t} )$, let us consider $\varphi(t)$
be any $\mathcal{C}^2$ path in $\MM$ which satisfies $\varphi(0)={\qqb}_0$, $\varphi(t)={\qqb}_{t}$, $\varphi^{'}(0)=\ppb_0$, and  $\varphi^{'}(t)=\ppb_{t} $.    Then we can rewrite  the NELD equation as 
\begin{align*}
\mathcal{V}_{t} =\sqrt{\frac{2\beta}{\gamma}}{e^{[t]A}}   \Big( \ddot{\varphi}_{t} +\nabla {V}({e^{[t]A}}    \varphi_{t} )+\AAI\dot{\varphi}_{t}\Big),
\end{align*}
where  $\mathcal{V}_{t} $ is a smooth control. Thus $(\varphi_{t} ,\dot{\varphi}_{t} )$ is a solution of the control system so that, ${U}_{t}$ drives the system from $({\qqb}_0,\ppb_0)$ to $({\qqb}_{t} ,\ppb_{t} )$. This implies that the support of the transition kernel $\mathcal{A}_{t} ({\qqb},\ppb) =\MM, \forall s>0, \  ({\qqb},\ppb)\in \MM. $
Thus by \cite[Corollary 6.2]{luc-ergo}, it follows that the transition  kernel is positive.
\end{proof}
Now, let us denote $\mathcal{B}_{\delta}(x)$ the open ball of raduis $\delta$ centered at $x$. We summarize the results of this section in the following Corollary:
 \begin{corollary} \label{Assumptionmin}
 The Markov process $\xxb$ with transition kernel $\WB{\mathcal{P}}_t(\xxb,\WB{B})$ satisfies, for some
fixed compact set $C \in \mathcal{B}(\MM)$, the following:
 \begin{itemize}
     \item for some $z^{*}  \in int(C)$ there is, for any $\delta > 0$, a $t_1 = t_1(\delta) \in\toro$ such that
     \begin{align*}
         \WB{\mathcal{P}}_{t_1}(\xxb,\mathcal{B}_{\delta}(z^{*})) >0, \quad \forall \xxb\in C 
\end{align*}
         \item for $t \in \toro$ the transition kernel possesses a density $\psi(\xxb,\yyb)$ precisely
         \begin{align*}
             \WB{\mathcal{P}}_t(\xxb,\WB{B}) =\int_{\WB{B}}\psi(\xxb,\yyb) d\yyb ,\ \forall \xxb\in C, \ \WB{B}\in \mathcal{B}(\MM)\cap\mathcal{B}(C),
         \end{align*}
and $\psi(\xxb,\yyb)$ is jointly continuous in $(\xxb,\yyb)\in C\times C$.
 \end{itemize} 
  
 \end{corollary}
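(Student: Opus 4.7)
The plan is to package two previous results, the smoothness of the transition density from Corollary~\ref{smooth-density} and the support-theoretic positivity of the transition kernel from Lemma~\ref{positive-density}, and verify that these transfer cleanly from the remapped Eulerian to the remapped Lagrangian coordinates.

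For the density claim, I would start from the $C^{\infty}$ Eulerian transition density $\WH{\psi}(t,\xxh\mid s,\yyh)$ guaranteed by the hypoellipticity of $\WH{U}_t^{\dagger}$ and Corollary~\ref{smooth-density}. The map $\Phi_t$ defined in~\eqref{change-var-} is linear with block-diagonal Jacobian $\mathrm{diag}(e^{[t]A},e^{[t]A})$; since $A$ is trace-free in both the shear and planar elongational cases, $\det\Phi_t=1$. Therefore the Lagrangian density
\[
\psi(t,\xxb\mid 0,\yyb)=\WH{\psi}\bigl(t,\Phi_t(\xxb)\mid 0,\Phi_0(\yyb)\bigr)
\]
inherits $C^{\infty}$ regularity in $(\xxb,\yyb)$, and in particular is jointly continuous on any compact $C\times C$, which is the second bullet.

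For the minorization bullet, I fix any reference point $z^{\ast}\in \mathrm{int}(C)$ and any $t_1\in(0,T)\subset \toro$. For an arbitrary $\xxb\in C$, I construct a $C^2$ interpolation $\varphi$ (for instance a cubic Hermite polynomial) joining the position component of $\xxb$ to that of $z^{\ast}$ with velocities matched to the corresponding momentum components on $[0,t_1]$; substituting into the formula $\mathcal{V}_t = \sqrt{2\beta/\gamma}\,e^{[t]A}\bigl(\ddot\varphi_t+\nabla V(e^{[t]A}\varphi_t)+\AAI\dot\varphi_t\bigr)$ from the proof of Lemma~\ref{positive-density} exhibits a smooth control steering the deterministic skeleton from $\xxb$ to $z^{\ast}$ in time $t_1$. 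The support/positivity argument cited there (\cite[Corollary 6.2]{luc-ergo}) then yields $\psi(t_1,z^{\ast}\mid 0,\xxb)>0$ for every $\xxb\in C$.

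To upgrade this pointwise positivity into a uniform mass on a ball, I invoke the joint continuity just established: $\xxb\mapsto \psi(t_1,z^{\ast}\mid 0,\xxb)$ is continuous and strictly positive on the compact set $C$, hence bounded below by some $c_0>0$; continuity in $\yyb$ then produces $\delta_0>0$ with $\psi(t_1,\yyb\mid 0,\xxb)\geq c_0/2$ for all $(\xxb,\yyb)\in C\times \mathcal{B}_{\delta_0}(z^{\ast})$, and integrating gives $\WB{\mathcal{P}}_{t_1}(\xxb,\mathcal{B}_{\delta}(z^{\ast}))>0$ uniformly in $\xxb\in C$, first for $\delta\leq \delta_0$ and then for $\delta>\delta_0$ by monotonicity. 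In fact one can take $t_1$ independent of $\delta$. The main technical subtlety I expect is the bookkeeping that $\Phi_t$ has constant unit Jacobian (needed for the smoothness transfer) together with confirming that a single $t_1\in(0,T)$ works uniformly over $\xxb\in C$; both reduce to routine checks given the block-linear form of $\Phi_t$ and the compactness of $C$.
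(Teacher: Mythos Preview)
Your proposal is correct and follows essentially the same route as the paper: the paper's own proof is a two-line remark that the first bullet follows from the positivity of the transition kernel in Lemma~\ref{positive-density} and the second from the smoothness in Corollary~\ref{smooth-density}. You invoke the same two ingredients, but you supply the details the paper omits --- the unit-Jacobian transfer of $C^\infty$ regularity through $\Phi_t$, and the compactness/continuity step promoting pointwise positivity of $\psi(t_1,z^\ast\mid 0,\xxb)$ to a uniform lower bound on $\WB{\mathcal{P}}_{t_1}(\xxb,\mathcal{B}_\delta(z^\ast))$ over $\xxb\in C$.
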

\begin{proof}
 The proof of the first argument is based on  the positivity of the transition kernel from Lemma~\ref{positive-density}  and the second is based on the smoothness of density from Corollary~\ref{smooth-density}.
\end{proof}
We use the above Corollary in the next section to show that the Lyanpunov function satisfies the minorization condition.
\subsubsection{The Invariant Measure of the Discrete Process}\label{existence}
The convergence of the Markov chain $\big(Q_{k+1},P_{k+1}\big)$ is   based on the uniform Lyapunov condition \cite[Assumpion 1]{Hairer} and the uniform minorization condition \cite[Assumpion 2]{Hairer} that we prove in the following two Lemmas.
\begin{lemma}{ (Uniform Lyapunov condition) } \label{Lyap}
There exists $a_n\in [0,1)$ and $b_n>0$ such that 
\begin{align}\label{unique-discetre}
\mathcal{U}_T {\KK}_n \leq a_n {\KK}_n+b_n,
\end{align}
for $\KK_n$ defined in~\eqref{Lyapunov}.
\end{lemma}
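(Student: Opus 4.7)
The plan is to apply Itô's formula to $\|\ppb\|^{2n}$ along the Lagrangian SDE \eqref{NELD-22-3} over one period $\theta \in [0,T]$, derive a differential inequality of the form $\tfrac{d}{d\theta}\mathbb{E}[\|\ppb_\theta\|^{2n}] \leq -\kappa_n \mathbb{E}[\|\ppb_\theta\|^{2n}] + M_n$, and close via Gronwall to recover \eqref{unique-discetre} at $\theta = T$. Writing $\Sigma(\theta) = \sigma e^{-\theta A}$ and $\Gamma = \gamma I + A$, Itô's formula produces a drift consisting of: (i) the damping term $-2n\|\ppb_\theta\|^{2n-2}\langle \ppb_\theta, \Gamma\ppb_\theta\rangle$; (ii) the potential-gradient forcing $-2n\|\ppb_\theta\|^{2n-2}\langle \ppb_\theta, e^{-\theta A}\nabla V(e^{\theta A}\qqb_\theta)\rangle$; and (iii) the Itô correction $n\|\ppb_\theta\|^{2n-2}\operatorname{tr}(\Sigma\Sigma^T) + 2n(n-1)\|\ppb_\theta\|^{2n-4}\langle \ppb_\theta, \Sigma\Sigma^T \ppb_\theta\rangle$.

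I would then bound each piece: the damping is at most $-2n(\gamma - \|A\|_{\mathrm{op}})\|\ppb_\theta\|^{2n}$; the potential-gradient term is dominated by $C\|\ppb_\theta\|^{2n-1}$ using the standing hypothesis that $\nabla V$ is bounded together with $\sup_{\theta \in [0,T]}\|e^{-\theta A}\| < \infty$; and the Itô correction is $O(\|\ppb_\theta\|^{2n-2})$ uniformly in $\theta$. Applying Young's inequality to absorb the lower powers $\|\ppb_\theta\|^{2n-1}$ and $\|\ppb_\theta\|^{2n-2}$ into a small fraction of $\|\ppb_\theta\|^{2n}$ plus constants yields the sought linear inequality with some $\kappa_n > 0$ and $M_n < \infty$. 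Taking expectations (the stochastic integral is a martingale of finite moments under the Lyapunov structure, which is verified by a standard localization argument) and invoking Gronwall's lemma then gives
\[
\mathbb{E}^{(\qqb_0,\ppb_0)}\bigl[\|\ppb_T\|^{2n}\bigr] \leq e^{-\kappa_n T}\|\ppb_0\|^{2n} + \frac{M_n}{\kappa_n}\bigl(1 - e^{-\kappa_n T}\bigr),
\]
and adding $1$ to both sides identifies $a_n = e^{-\kappa_n T} \in [0,1)$ and $b_n = (M_n/\kappa_n + 1)(1 - e^{-\kappa_n T})$, which depend only on $n, \gamma, \beta, A$ and $\|\nabla V\|_\infty$, hence are uniform in the initial datum $(\qqb_0, \ppb_0)$.

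The main obstacle will be establishing strict coercivity of $\langle \ppb, \Gamma\ppb\rangle$ in the presence of the non-symmetric part of $A$, since $\Gamma = \gamma I + A$ is not symmetric in either flow. In the shear case, $A$ is nilpotent with $\langle \ppb, A\ppb\rangle = \epsilon p_1 p_2$, controlled by $\tfrac{|\epsilon|}{2}\|\ppb\|^2$ via the AM-GM inequality; in the planar elongational case, $A$ is diagonal with eigenvalues $\pm\epsilon$, giving $\langle \ppb, A\ppb\rangle \geq -|\epsilon|\|\ppb\|^2$. In both situations, provided the friction coefficient dominates the symmetric part of $A$ (which amounts to $\gamma > |\epsilon|$ for PEF, and $\gamma > |\epsilon|/2$ for shear), $\kappa_n$ can be chosen strictly positive and \eqref{unique-discetre} follows. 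This standing assumption on $\gamma$ relative to $\epsilon$ should be stated explicitly as part of the hypotheses.
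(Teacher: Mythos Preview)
Your It\^o--Gronwall scheme is a reasonable instinct, but there is a genuine gap and it differs from the paper's argument in a way that matters.

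\textbf{The gap: the momentum remapping at the period boundary.} The SDE~\eqref{NELD-22-3} governs the continuous evolution of $\ppb_{kT+\theta}$ for $\theta\in[0,T)$ only; at $\theta=T$ the remapping $\WB{\mathbf{h}}_t$ of~\eqref{momentumRemapEuler} applies $e^{TA}$ to the momentum. Hence your Gronwall estimate bounds $\mathbb{E}\bigl[\|\ppb_{T^-}\|^{2n}\bigr]=\mathbb{E}\bigl[\|P_{k+1}^-\|^{2n}\bigr]$, not $\mathbb{E}\bigl[\|P_{k+1}\|^{2n}\bigr]$. Since $P_{k+1}=e^{TA}P_{k+1}^-$ and $\|e^{TA}\|_{\mathrm{op}}>1$ in both the shear and PEF cases, the identification $a_n=e^{-\kappa_n T}$ is not justified: you would need to absorb an extra factor $\|e^{TA}\|_{\mathrm{op}}^{2n}$, which further tightens the required relation between $\gamma$ and $|\epsilon|$ (for PEF it becomes $\gamma>2|\epsilon|$, not $\gamma>|\epsilon|$).

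\textbf{Comparison with the paper's proof.} The paper avoids any constraint of the form $\gamma>c|\epsilon|$ altogether. Instead of bounding $\langle\ppb,\Gamma\ppb\rangle$ from below, it multiplies the momentum equation by the integrating factor $e^{\Gamma\theta}$ and integrates exactly, obtaining the Duhamel representation $P_{k+1}^-=e^{-\Gamma T}(P_k+\mathscr{F}_k+\mathscr{G}_k)$ with $\mathscr{F}_k$ bounded (since $\nabla V$ is) and $\mathscr{G}_k$ centered Gaussian. The crucial point is that the remapping then \emph{exactly cancels} the $A$--part of the damping:
\[
P_{k+1}=e^{TA}P_{k+1}^-=e^{TA}e^{-(\gamma I+A)T}(P_k+\mathscr{F}_k+\mathscr{G}_k)=e^{-\gamma T}(P_k+\mathscr{F}_k+\mathscr{G}_k),
\]
because $\gamma I$ and $A$ commute. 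The contraction factor is therefore $\alpha=e^{-\gamma T}<1$ unconditionally, and the Lyapunov inequality follows by squaring, using Young's inequality on the cross term, and then raising to the $n$th power. Your coercivity estimate $\langle\ppb,\Gamma\ppb\rangle\geq(\gamma-\|A\|_{\mathrm{op}})\|\ppb\|^2$ throws away precisely the structure that the remapping later restores, which is why you are forced to impose the extraneous hypothesis on $\gamma$. That hypothesis does not appear in the paper and is not needed.
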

\begin{proof}
First, let us recall the NELD in the remapped Lagrangian coordinates under the remappings as follows:
 \begin{align} \label{NELD-22} 
 \begin{cases}
d   \qqb_{kT+\theta} &=  \ppb_{kT+\theta} {d\theta} ,\\
d\ppb_{kT+\theta}&= - e^{ - \theta A }  \nabla {V}\big(e^{ \theta A }   \qqb_{kT+\theta}\big)  {d\theta} -\Gamma\ppb_{kT+\theta} {d\theta}+   \sigma  e^{ - \theta  A }  dW_{kT+\theta},
\end{cases} 
\end{align} 
where $(\theta,\qqb_{kT+\theta},\ppb_{kT+\theta}) \in{ \toro \times\MM}$, and $\AAI=(\gamma+A)$.
Multiplying the second equation of \eqref{NELD-22} by the integrating factor $e^{\AAI\theta}$, we get
we have 
\begin{align}\label{eq2-2}  
d\big({e^{\AAI \theta}{\ppb}_{kT+\theta}}\big)
&=   e^{\gamma\theta}(-  {\nabla {V}(e^{\theta A}  {\qqb}_{kT+\theta}) } {d\theta} +\sigma dW_{kT+\theta} ) 
.\end{align}
We  integrate \eqref{eq2-2} over a period to get the evolution of $\ppb$ 
up to, but not including the remapping, finding
\begin{align*} 
e^{\AAI  T }{P}_{k+1}^{-}-  {P}_k = \int_0^{T}e^{\gamma\theta}( -   {\nabla {V}(e^{\theta A}  {\qqb}_{kT+\theta}) }  {d\theta} +\sigma   dW_{kT+\theta} ).
\end{align*}
We multiply the above equation by $e^{- \AAI  T }$ to get
\begin{align*}
 {P}_{k+1}^{-}&= e^{-  \AAI  T }({P}_k+ \mathscr{F}_k+ \mathscr{G}_k ),
\end{align*}
where
\begin{align*}
\mathscr{F}_k&= \int_0^{T} e^{ \gamma \theta  } {\nabla {V}(e^{\theta A}  {\qqb}_{kT+\theta}) }{d\theta},\quad
\mathscr{G}_k=  \sqrt{\frac{2 \gamma}{\beta}}\int_0^{T}e^{ \gamma \theta  } dW_{kT+\theta}.
\end{align*}
We apply the remapping to the momentum to obtain
\begin{align*}
{P}_{k+1}&=e^{ TA} {P}_{k+1}^{-} =e^{-  \gamma  T } ({P}_k+ \mathscr{F}_k+ \mathscr{G}_k ).
\end{align*}
Letting $\alpha =   e^{-\gamma T  } $, we have $ 0\leq \alpha <1$. Also, note  that $\mathscr{G}_k$ has mean zero and covariance  $ \frac{1 }{\beta}\big({e^{2 \gamma T}-1}\big) $ as
\begin{align*}
\EEF{ \mathscr{G}_k^2 } &=   \frac{1}{\beta}  \EEF{  2 \gamma \big({\int_0^{T}e^{ \gamma \theta  }\big)
dW_{kT+\theta}}^2  } 
=\frac{1 }{\beta}\big({e^{2 \gamma T}-1}\big).
\end{align*}                                       
Since $\normll{ e^{ \gamma\theta }  \nabla {V}\big({e^{ \theta A }   \qqb_{kT+\theta}}\big)} \leq C$, we have
\begin{align*}
\normll{\mathscr{F}_k} \leq \alpha \normll{e^{-AT}} T C   <\infty.
\end{align*}
Using the previous results, we get
\begin{align*}
\normll{ P_{k+1}}^2&\leq   \alpha^2\normll{  P_k+\mathscr{F}_k+\mathscr{G}_k }^2\leq \alpha^2 (\normll{P_k}^2+  \normll{ \mathscr{F}_k+ \mathscr{G}_k }^2+2  \left\langle P_k, \mathscr{F}_k+ \mathscr{G}_k \right\rangle )\\
&\leq  \alpha^2((1+ \mu) \normll{P_k}^2+ \big({2+\frac{1}{ \mu }}\big) \normll{\mathscr{F}_k}^2+2  \left\langle P_k,  \mathscr{G}_k \right\rangle+2\normll{\mathscr{G}_k}^2 ),
\end{align*}
where we choose $ \mu$ s.t. $  \alpha^2(1+ \mu)<1 $. Since  $\EEF{  \left\langle P_k, \mathscr{G}_k \right\rangle} =0$, taking the expectation of $\KK_2$,  
we get
\begin{align*}
\EEF{ {\KK}_2(P_{k+1},Q_{k+1})} \leq    \alpha^2(1+ \mu) {\KK}_2(P_{k },Q_{k })+C_{ \mu},
\end{align*}
where $C_{ \mu}=\big({2+\frac{1}{4 \mu }}\big) \normll{\mathscr{F}_k}^2+2\normll{\mathscr{G}_k}^2$.
This  leads us to find a bound on the expectation of $\KK_n$ as  
\begin{align*}
\EEF{ {\KK}_n(P_{k+1},Q_{k+1})} \leq \big( { \alpha^2(1+ \mu) {\KK}_2(P_{k },Q_{k })+C_{ \mu}}\big)^n \leq  \alpha^{2n}(1+ \mu)^n\normll{P_{k }}^{2n}+\mathfrak{p}(| P_{k }|),
\end{align*}
where $\mathfrak{p}$ is a polynomial of degree at most $2(n-1)$, with positive coefficient. Since we have
\begin{align*}
\frac{\mathfrak{p}(| P_{k }|)}{ \normll{P_{k }}^{2n}}\to 0, \textrm{ as }\normll{P_{k }}\to \infty \implies
\mathfrak{p}(| P_{k }|)=\delta \normll{P_{k }}^{2n}+ C_{\delta},
\end{align*}
and for $\delta$ small, finally we get the uniform bound
\begin{align*}
\EEF{ {\KK}_n(P_{k+1},Q_{k+1}) } \leq  a\normll{P_{k }}^{2n}+C_{\delta}.
\end{align*}
where $a=  \alpha^{2n}(1+ \mu)^n+\delta<1.$ 
\end{proof} 
\begin{lemma}{(Uniform minorization condition)}\label{Uniform}
Fix any $p_{max}>0$, then there exists a probability measure $\vartheta\in \MM$ and a constant $\kappa$ such that, 
\begin{align*}
\forall \WB{B}\in \mathcal{B}( \MM), \quad  \mathbb{P}\Big((P_{k+1},Q_{k+1})\in \WB{B} \ \Big| \ \normll{P_{k }}\leq p_{max}\Big)&\geq \kappa \vartheta(\WB{B}).
\end{align*}  
\end{lemma}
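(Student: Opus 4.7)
The plan is to exploit Corollary~\ref{Assumptionmin}: on a compact subset of $\MM$, the $T$-step transition kernel of the discrete chain $(Q_k,P_k)$ admits a jointly continuous, strictly positive density. A minorization over the momentum-bounded set $\{\normll{P_k}\leq p_{max}\}$ then drops out of an elementary $\inf$ argument on a product of two compact sets, with $\vartheta$ taken to be a normalized Lebesgue restriction to a fixed target set.

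Concretely, I would first observe that conditioning on $\normll{P_k}\leq p_{max}$ localizes the state to the compact set
\begin{align*}
C_{p_{max}} := \LL_0^d \times \{\ppb\in\RRRd : \normll{\ppb}\leq p_{max}\} \subset \MM,
\end{align*}
which is compact because $\LL_0^d$ is a product of compact fundamental cells. Next, fix a compact target set $K\subset\MM$ of positive Lebesgue measure (e.g.\ a small closed ball around the reference point $z^{*}$ supplied by Corollary~\ref{Assumptionmin}), apply that corollary on an enlarged compact $C\supset C_{p_{max}}\cup K$ to obtain a jointly continuous, everywhere positive transition density $\psi(\xxb,\yyb)$ for $\WB{\mathcal{P}}_T(\xxb,\cdot)$, and set
\begin{align*}
m := \inf_{(\xxb,\yyb)\in C_{p_{max}}\times K}\psi(\xxb,\yyb) > 0,
\end{align*}
the positivity following from continuity and strict positivity on a compact. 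Defining
\begin{align*}
\vartheta(\WB{B}) := \frac{\mathrm{Leb}(\WB{B}\cap K)}{\mathrm{Leb}(K)}, \qquad \kappa := m\,\mathrm{Leb}(K),
\end{align*}
we get, for every $\xxb\in C_{p_{max}}$ and every $\WB{B}\in\mathcal{B}(\MM)$,
\begin{align*}
\WB{\mathcal{P}}_T(\xxb,\WB{B}) = \int_{\WB{B}}\psi(\xxb,\yyb)\,d\yyb \geq \int_{\WB{B}\cap K}\psi(\xxb,\yyb)\,d\yyb \geq \kappa\,\vartheta(\WB{B}).
\end{align*}
The stated conditional minorization then follows by integrating this uniform pointwise bound against the conditional law of $(Q_k,P_k)$ on $\{\normll{P_k}\leq p_{max}\}$ via the Markov property and the tower rule.

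The main obstacle is really only a bookkeeping one: checking that the density at the discrete-chain step size $t=T$ is indeed the object supplied by Corollary~\ref{Assumptionmin}, and that joint continuity together with positivity hold uniformly on $C_{p_{max}}\times K$. The former holds because the hypoellipticity of the generator and the support-theorem argument in Lemma~\ref{positive-density} apply at every positive time, in particular at $t=T$; the latter follows by enlarging the compact $C$ in Corollary~\ref{Assumptionmin} to contain $C_{p_{max}}\cup K$. No further probabilistic machinery beyond the smoothness and positivity already established is required.
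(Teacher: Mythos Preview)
Your argument is correct and follows the same route as the paper. The paper's proof simply invokes Corollary~\ref{Assumptionmin} and then cites \cite[Lemma~2.3]{Mattinglya2002} as a black box; your proposal unpacks that black box explicitly by taking the infimum of the jointly continuous, strictly positive transition density over the compact product $C_{p_{max}}\times K$ and defining $\vartheta$ as normalized Lebesgue measure on $K$, which is precisely the mechanism behind the cited lemma.
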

The proof is essentially based on the arguments from \cite{luc-ergo,Mattinglya2002} which uses the continuity property of the Markov process,  the irregularity and positivity of the transition kernel.  
Before we start the proof, we  consider the following Lemma:
\begin{lemma}\cite[Lemma 2.3]{Mattinglya2002}
If the Markov process $\xxb$ satisfies the assumption in Corollary~\ref{Assumptionmin},  then there is a choice of $t\in\toro$, an $\kappa\ge 0$, and a probability measure $\vartheta$, with $\vartheta(C^{c})=0$ and $\vartheta(C) = 1$, such that
\begin{align}
    \WB{\mathcal{P}}(\xxb,\WB{B})\geq \kappa \vartheta(\WB{B}), \quad \forall \WB{B}\in \mathcal{B}(\MM), \ \xxb\in C.
\end{align} 
\end{lemma}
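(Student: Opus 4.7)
The plan is to build the minorization in two stages, following the standard Mattingly--Stuart blueprint and using the two ingredients of Corollary~\ref{Assumptionmin}: the first bullet (controllability of the measure to any neighborhood of $z^{*}$) to push any $\xxb\in C$ into a small ball around $z^{*}$, and the second bullet (existence of a jointly continuous density on $C\times C$) together with Lemma~\ref{positive-density} to convert that concentration near $z^{*}$ into a lower bound on arbitrary Borel sets. The candidate probability measure is the normalized Lebesgue measure on $C$,
\[
\vartheta(\WB{B}):=\frac{\mathrm{Leb}(\WB{B}\cap C)}{\mathrm{Leb}(C)},
\]
which automatically satisfies $\vartheta(C^{c})=0$ and $\vartheta(C)=1$.

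Concretely, I would proceed in three steps. Step one: since $z^{*}\in\mathrm{int}(C)$, choose $\delta>0$ with $\overline{\mathcal{B}_{\delta}(z^{*})}\subset\mathrm{int}(C)$; by the first bullet of Corollary~\ref{Assumptionmin} there is $t_{1}=t_{1}(\delta)\in\toro$ with $\WB{\mathcal{P}}_{t_{1}}(\xxb,\mathcal{B}_{\delta}(z^{*}))>0$ for every $\xxb\in C$. The second bullet writes this probability as $\int_{\mathcal{B}_{\delta}(z^{*})}\psi(\xxb,\yyb)\,d\yyb$, which is continuous in $\xxb$ by joint continuity of $\psi$ and dominated convergence, so compactness of $C$ gives a uniform lower bound $\kappa_{0}>0$. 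Step two: fix any $t_{2}\in\toro$ and use the second bullet again to obtain a continuous density on $C\times C$; together with Lemma~\ref{positive-density}, upgraded to pointwise positivity as discussed below, the infimum
\[
c:=\min_{(\yyb,\zzb)\in\overline{\mathcal{B}_{\delta}(z^{*})}\times C}\psi(\yyb,\zzb)
\]
is strictly positive. Step three: apply Chapman--Kolmogorov at the intermediate time $t_{1}$:
\begin{align*}
\WB{\mathcal{P}}_{t_{1}+t_{2}}(\xxb,\WB{B})
&\geq \int_{\mathcal{B}_{\delta}(z^{*})}\WB{\mathcal{P}}_{t_{2}}(\yyb,\WB{B}\cap C)\,\WB{\mathcal{P}}_{t_{1}}(\xxb,d\yyb)\\
&\geq c\,\mathrm{Leb}(\WB{B}\cap C)\,\WB{\mathcal{P}}_{t_{1}}(\xxb,\mathcal{B}_{\delta}(z^{*}))\\
&\geq c\,\kappa_{0}\,\mathrm{Leb}(C)\,\vartheta(\WB{B}),
\end{align*}
so taking $t=t_{1}+t_{2}$ (reduced mod $T$ in $\toro$ via the time-periodicity of the coefficients) and $\kappa=c\,\kappa_{0}\,\mathrm{Leb}(C)$ completes the proof.

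The main obstacle is the pointwise positivity required in step two. Lemma~\ref{positive-density} only asserts that the topological support of $\WB{\mathcal{P}}_{t_{2}}(\yyb,\cdot)$ is all of $\MM$, which gives positivity of the density only almost everywhere; a continuous density can in principle still vanish at an isolated point even when its associated measure has full support. To rule this out on the compact set $\overline{\mathcal{B}_{\delta}(z^{*})}\times C$, I would revisit the control-theoretic construction in the proof of Lemma~\ref{positive-density}: the explicit smooth control $\mathcal{V}_{t}$ constructed there joins any prescribed pair of endpoints in bounded time $t_{2}$, and by the standard Stroock--Varadhan style corollary in the spirit of \cite{luc-ergo} this yields pointwise strict positivity of $\psi(\yyb,\zzb)$ throughout the accessible compact region. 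Once this upgrade is in hand, continuity plus compactness delivers $c>0$ and the composition above closes the argument.
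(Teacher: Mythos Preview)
The paper does not supply its own proof of this lemma: it is quoted verbatim as \cite[Lemma~2.3]{Mattinglya2002} and used as a black box in the proof of Lemma~\ref{Uniform}. What you have written is essentially a reconstruction of the original Mattingly--Stuart--Higham argument, and the overall architecture (push into a ball around $z^{*}$, then use a continuous density to minorize, then compose via Chapman--Kolmogorov) is correct.

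One remark on efficiency. Your choice of $\vartheta$ as normalized Lebesgue measure on \emph{all} of $C$ is what forces you, in Step~2, to need strict positivity of the density on the whole of $\overline{\mathcal{B}_{\delta}(z^{*})}\times C$, which you then have to recover via an additional Stroock--Varadhan type argument. The original proof in \cite{Mattinglya2002} avoids this detour by letting $\vartheta$ be supported on a \emph{smaller} ball: from $\WB{\mathcal{P}}_{t_{2}}(z^{*},\mathcal{B}_{\delta}(z^{*}))>0$ one obtains a single point $\yyb_{0}\in\mathcal{B}_{\delta}(z^{*})$ with $\psi(z^{*},\yyb_{0})>0$, and joint continuity then yields $\psi\geq c>0$ on a product of small balls $\mathcal{B}_{\varepsilon}(z^{*})\times\mathcal{B}_{\varepsilon}(\yyb_{0})\subset C\times C$. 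Taking $\vartheta$ to be normalized Lebesgue measure on $\mathcal{B}_{\varepsilon}(\yyb_{0})$ already satisfies $\vartheta(C)=1$, $\vartheta(C^{c})=0$, and the Chapman--Kolmogorov composition goes through with no appeal to pointwise positivity beyond that single point. Your route works, but it is longer than necessary; the obstacle you flag is an artifact of fixing $\vartheta$ too early.
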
 
Then the proof of Lemma~\ref{Uniform} is following:
\begin{proof}
Since the discrete chain $(P_{k+1},Q_{k+1})\in \WB{B} $ satisfies the assumption in Corollary~\ref{Assumptionmin}, then using the above Lemma, we have the expected result.
\end{proof}

Using the  previous Lemmas, we state the following uniform convergence result
for the sampled chain $(Q_k,P_k)$ from \cite{Hairer}:
\begin{theorem}\cite[Theorem 1.2]{Hairer} \label{hairerT}
If   $\mathcal{U}_T$ satisfies the Lyapunov condition as in  Lemma~\ref{Lyap} and the minorization condition as in Lemma~\ref{Uniform}, then  $ \mathcal{U}_T$ admits a unique invariant measure 
$ {\nu }$ such that  for   $C_n,\lambda_n > 0$,
\begin{align*}
\WB{\ff} =\int_{ \MM } \ff({\qqb},{\ppb}) \nu ( {\qqb},{\ppb})d{\qqb} d{\ppb},
\end{align*}
\begin{align}\label{equ-uniform}
\normLK{\mathcal{U}_T^k \ff-\WB{\ff} }\leq  C_n  e^{-\lambda_n k T} \normLK{\ff-\WB{\ff}}.
\end{align}
\end{theorem}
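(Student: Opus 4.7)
The plan is to invoke the abstract Harris-type ergodicity result of Hairer (cited as \cite[Theorem 1.2]{Hairer}) for a time-homogeneous Markov chain on a Polish state space that satisfies a geometric drift (Lyapunov) condition and a uniform minorization (small-set) condition. The two nontrivial hypotheses have already been verified: the drift inequality $\mathcal{U}_T \KK_n \le a_n \KK_n + b_n$ with $a_n\in[0,1)$ is established in Lemma~\ref{Lyap}, and the uniform minorization on sublevel sets of $\KK_n$ (equivalently, on balls $\{\normll{P_k}\le p_{\max}\}$) is established in Lemma~\ref{Uniform}. The first step, then, is simply to observe that because the NELD coefficients in the remapped Lagrangian picture \eqref{NELD-22-3} are $T$-periodic in $\theta$, the sampled chain $(Q_k,P_k)=(\qqb_{kT},\ppb_{kT})$ is \emph{time-homogeneous}, so that Hairer's framework applies directly to $\mathcal{U}_T$.

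Next, I would recall the mechanism behind Hairer's theorem to make the exponential rate $\lambda_n$ and constant $C_n$ concrete. Introduce the weighted metric $d_\beta(x,y) = (2 + \beta \KK_n(x) + \beta \KK_n(y))\mathbf{1}_{x\neq y}$ on $\MM$, and the corresponding dual Kantorovich-type semi-norm on signed measures with zero integral. The drift condition guarantees that for some level $R>2b_n/(1-a_n)$, the set $C=\{\KK_n\le R\}$ is visited with a uniform return rate; the minorization condition on $C$ produces a coupling between any two copies of the chain started in $C$ that succeeds with probability $\ge\kappa$ in one step. Combining these two ingredients in the standard way yields a strict contraction $\|\mathcal{U}_T\mu - \mathcal{U}_T\nu\|_{\beta,\KK_n} \le \rho\,\|\mu-\nu\|_{\beta,\KK_n}$ for some $\rho\in(0,1)$, $\beta>0$. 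Iterating this contraction on the Banach space of signed measures with finite $\KK_n$-moment produces, by the Banach fixed-point theorem, a unique invariant probability measure $\nu$, and furnishes the bound \eqref{equ-uniform} with $C_n = \rho^{-1}$ and $\lambda_n = -(\log \rho)/T$.

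Translating back to test functions via the duality $L^\infty_{\KK_n}$--measures with finite $\KK_n$-moment gives the stated inequality on $\|\mathcal{U}_T^k\ff - \WB{\ff}\|_{L^\infty_{\KK_n}}$, with $\WB{\ff}=\int \ff\,d\nu$ well defined since $\ff/\KK_n$ is bounded and $\nu$ has finite $\KK_n$-moment as a consequence of the drift bound (obtained by integrating the Lyapunov inequality against $\nu$).

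The only step that really needs care is the verification that the Lyapunov constants $a_n,b_n$ and the minorization parameters $\kappa,\vartheta$ in Lemmas~\ref{Lyap} and~\ref{Uniform} are genuinely independent of the sampling index $k$. This uniformity is exactly what the $T$-periodicity of the coefficients and the explicit computations with the integrating factor $e^{\AAI\theta}$ in Lemma~\ref{Lyap} provide, so no extra work beyond citing the preceding lemmas is required; the main conceptual obstacle was already resolved upstream, in converting the time-inhomogeneous absolute Lagrangian dynamics to a time-homogeneous sampled chain via the remappings $\mathfrak{R}_t$.
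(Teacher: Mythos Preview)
Your proposal is correct in content, but it goes well beyond what the paper does. In the paper this theorem carries no proof at all: it is simply quoted from \cite[Theorem~1.2]{Hairer}, with the hypotheses pointed to Lemmas~\ref{Lyap} and~\ref{Uniform}, and the conclusion is then used downstream in Lemma~\ref{uniform-continuous}. The authors treat it as a black-box import.

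What you have written is a faithful sketch of the Hairer--Mattingly argument itself: the weighted total-variation metric $d_\beta$, the contraction obtained by combining the drift toward a sublevel set $\{\KK_n\le R\}$ with the one-step coupling from the minorization, and the Banach fixed-point conclusion. This is the right mechanism and your identification of $\lambda_n=-(\log\rho)/T$ is the natural one. Your additional remark about time-homogeneity of the sampled chain, coming from $T$-periodicity of the remapped Lagrangian coefficients, is a useful observation that the paper leaves implicit. So your write-up is strictly more informative than the paper's treatment; if the intent was only to match the paper, a one-line citation suffices.
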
   
 
Then, we derive the convergence of the continuous process  $(\qqb_{t},\ppb_{t})$ in the following Lemma: 
 \begin{lemma}\label{uniform-continuous}
The Markov process $(\qqb_{t},\ppb_{t})$ converges exponentially  to the  limit cycle $ {\nu\circ\Phi_{t}}$:
\begin{align*}
\norml{\EEys{\Big({\ff\circ\Phi_{t}}\Big)(\xxb_{t  })}- \WB{\ff }([t])}\leq  C_n  e^{-\lambda_n t}\normLK{\ff-\WB{\ff }([t])}\Big( 1+\KK_n (\yy) \Big), \quad   \yy=\xx_0,
\end{align*}
where $\WB{\ff}([t])$ is defined in \eqref{limit-cycle}.
 \end{lemma}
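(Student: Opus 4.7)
The plan is to lift the discrete-time exponential convergence from Theorem~\ref{hairerT} to the continuous process by exploiting the Markov property together with the $T$-periodicity of the coefficients of~\eqref{NELD-22-3}. First I would write $t=kT+\theta$ with $k=\lfloor t/T\rfloor$ and $\theta=[t]\in[0,T)$, and introduce the one-period-fraction propagator
\begin{equation*}
g_{\theta}(\xxb)\ :=\ \mathbb{E}^{0,\xxb}\bigl[(\ff\circ\Phi_{\theta})(\xxb_{\theta})\bigr].
\end{equation*}
By conditioning on $\mathcal{F}_{kT}$ and using that the Lagrangian SDE has $T$-periodic coefficients, the Markov property gives
\begin{equation*}
\mathbb{E}^{0,\yy}\bigl[(\ff\circ\Phi_{t})(\xxb_{t})\bigr]\ =\ \bigl(\mathcal{U}_{T}^{k}\, g_{\theta}\bigr)(\yy).
\end{equation*}

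Next, I would identify the correct centering constant. Integrating $g_{\theta}$ against the discrete invariant measure $\nu$ of $\mathcal{U}_T$ and transporting the fractional-period propagation onto $\nu$ yields
\begin{equation*}
\int_{\MM} g_{\theta}(\xxb)\,\nu(d\xxb)\ =\ \int_{\MM}(\ff\circ\Phi_{\theta})(\xxb)\,(\nu\circ\Phi_{\theta})(\theta,\xxb)\,d\xxb\ =\ \WB{\ff}(\theta)\ =\ \WB{\ff}([t]),
\end{equation*}
so $\WB{\ff}([t])$ is exactly the $\nu$-average of $g_\theta$. Applying Theorem~\ref{hairerT} to $g_\theta$ and evaluating the resulting weighted-$L^\infty$ bound at $\yy$ gives
\begin{equation*}
\bigl|(\mathcal{U}_{T}^{k}g_{\theta})(\yy)-\WB{\ff}([t])\bigr|\ \le\ C_n e^{-\lambda_n kT}\,\bigl\|g_{\theta}-\WB{\ff}([t])\bigr\|_{L^{\infty}_{\KK_n}}\,\KK_n(\yy),
\end{equation*}
and since $\KK_n(\yy)\le 1+\KK_n(\yy)$ and $e^{-\lambda_n kT}\le e^{\lambda_n T}e^{-\lambda_n t}$, the factor $e^{\lambda_n T}$ is absorbed into $C_n$.

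The remaining step is to control $\|g_{\theta}-\WB{\ff}([t])\|_{L^{\infty}_{\KK_n}}$ uniformly in $\theta\in[0,T)$ by $\|\ff-\WB{\ff}([t])\|_{L^{\infty}_{\KK_n}}$. Writing $g_{\theta}(\xxb)-\WB{\ff}([t])=\mathbb{E}^{0,\xxb}\bigl[(\ff\circ\Phi_{\theta})(\xxb_{\theta})-\WB{\ff}([t])\bigr]$ and bounding the integrand by $\|\ff-\WB{\ff}([t])\|_{L^{\infty}_{\KK_n}}\KK_n(\xxb_{\theta})$, it suffices to establish the uniform-in-$\theta$ continuous Lyapunov estimate $\mathbb{E}^{0,\xxb}[\KK_n(\xxb_{\theta})]\le \widetilde{C}_n\KK_n(\xxb)$ for $\theta\in[0,T]$. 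This is proved by exactly the same integrating-factor computation used in Lemma~\ref{Lyap}, now carried out up to an arbitrary time $\theta\in[0,T]$ rather than up to $T$; the moment bound on the Gaussian increment $\mathscr{G}_k$ and the uniform bound on $\mathscr{F}_k$ both remain valid on $[0,\theta]$, and Grönwall closes the estimate.

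The main obstacle is the bookkeeping of the two different semigroups, namely the time-inhomogeneous continuous one and the time-homogeneous discrete $\mathcal{U}_T$, and in particular verifying that the $\nu$-average of $g_\theta$ really coincides with the limit cycle's marginal $\WB{\ff}([t])$; once this identification is made and the continuous Lyapunov estimate is secured, combining the three bounds yields the claimed exponential contraction with the prefactor $(1+\KK_n(\yy))$.
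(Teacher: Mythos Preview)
Your proposal is correct and shares the paper's overall skeleton---split $t=kT+\theta$, invoke Theorem~\ref{hairerT} for the $k$ full periods, and control the fractional $\theta$ piece via a continuous-time Lyapunov bound---but the two executions differ in how the pieces are assembled. You condition on $\mathcal{F}_{kT}$ and apply $\mathcal{U}_T^k$ to the propagated observable $g_\theta$, so the Lyapunov estimate enters through $\normLK{g_\theta-\WB{\ff}(\theta)}$ and you need the explicit identification $\int g_\theta\,d\nu=\WB{\ff}(\theta)$. The paper instead passes to remapped Eulerian coordinates, conditions on $\mathcal{F}_\theta$, and applies the discrete bound from the random initial point $\xxh_\theta$, so the weight appears directly as $\EEys{\KK_n(\xxh_\theta)}$ and no norm of a composite observable has to be estimated. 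For the continuous Lyapunov bound the paper does not re-run the integrating-factor argument of Lemma~\ref{Lyap}; it computes $\WH{U}_t\KK_n\le-\WH{a}_n\KK_n+\WH{b}_n$ straight from the Eulerian generator and closes with Gr\"onwall to get $\EEys{\KK_n(\xxh_\theta)}\le e^{-\WH{a}_n\theta}\KK_n(\yy)+\WH{b}_n/\WH{a}_n$. Your ordering buys a cleaner bookkeeping of which function $\mathcal{U}_T^k$ acts on and makes the limit-cycle centering transparent; the paper's ordering avoids the intermediate norm bound on $g_\theta$ and yields the slightly sharper exponential-in-$\theta$ form of the Lyapunov estimate. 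Both routes finish the same way, absorbing $e^{\lambda_nT}$ into $C_n$.
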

 \begin{proof}
We work in $(\qqh,\pph)$ variables to simplify the proof.  We use an argument from \cite{meynTweedie,Mattinglya2002} and the result from Theorem~\ref{hairerT} to show that the Markov process $(\qqh_{kT+\theta},\pph_{kT+\theta})\in \LRdt$ converges exponentially  to a limit cycle. The result implies immediately that the Markov process $(\qq_{kT+\theta},\pp_{kT+\theta})\in\MM$ converges as well. We start by using the result from Lemma~\ref{Lyap} and Lemma~\ref{Uniform}, and derive from Theorem~\ref{hairerT} that 
\begin{align*}
\norml{\EEys{\ff(\xxh_{{kT}  })}-\WB{\ff} }\leq  C_n  e^{-\lambda_n k T}\normLK{\ff-\WB{\ff}}\KK_{n  } (\yy ) ,
\quad
\WB{\ff} =\int_{  \WH{\LL}_{\theta}^d\times\RRRd } {\ff  }({\qqh},{\pph})\nu( {\qqh},{\pph})d{\qqh} d{\pph},
\end{align*}
 where we rewrite \eqref{equ-uniform} in remapped Eulerian coordinates. Conditioning on $\mathcal{F}_{\theta}$, we have
\begin{align}\label{EQ-1}
\norml{\EEys{\ff(\xxh_{{kT+\theta}  })}- \WB{\ff}(\theta)  }\leq  C_n  e^{-\lambda_n k T}\normLK{\ff- \WB{\ff}(\theta)}\EEys{\KK_{n  } (\xxh_{\theta} )  }.
\end{align}
We compute an upper bound on $\EEys{\KK_{n  } (\xxh_{\theta} )  }$ as follows:
using the  It\^o's lemma, we get
\begin{align*}
d\ff(\XXh_{{t}  }) = \big(  \WH{U}_t \ff\big)(\XXh_{{t}  })dt+\left\langle\nabla\ff(\XXh_{{t}  }),\WT{\Sigma} {dW} \right\rangle, 
\end{align*}
where
\begin{align*}
 {\WH{U}_t}=\left\langle\pph+A{\qqh}, \nabla_{\qqh} \cdot\right\rangle+\left\langle-\nabla V(\qqh) ,\nabla_{\pph}\cdot\right\rangle-\gamma\left\langle\pph, \nabla_{\pph}\cdot\right\rangle  +\frac{1}{2} \sigma  \sigma^T:\nabla^2 .
\end{align*}
Then, it follows that
 \begin{align*}
 {\WH{U}_t}\KK_n(\qqh,\pph)= & 
    - n\normll{\pph}^{n-2}\left\langle\nabla V( \qqh)  , {\pph}\right\rangle -n \gamma\Big( \left\langle  \pph,  {\pph}\right\rangle-\frac{n+d-2}{\beta} \Big)\normll{\pph}^{n-2} \\ 
         \leq & -n  \gamma\normll{\pph}^{ n  } +n\normll{\nabla V( \qqh) }\normll{\pph}^{n-1}+n\gamma \frac{ n+d-2 }{\beta}  \normll{\pph}^{n-2}. 
\end{align*}
Thus, there exists $\WH{a}_n, \WH{b}_n\geq 0$ such that
\begin{align*}
 {\WH{U}_t}\KK_n\leq -\WH{a}_n\KK_n+\WH{b}_n, \quad \WH{a}_n=n  \gamma,\quad \textrm{as }
 \lim_{\norml{\qqh,\pph}\to\infty}\frac{{\WH{U}_t}\KK_n}{\KK_n}\leq -\WH{a}_n,
\end{align*}
and it follows that
 \begin{align*}
 d\KK_n(\xxh_{kT+\theta})\leq (-\WH{a}_n\KK_n+\WH{b}_n){d\theta}+\textrm{Martingle}.
\end{align*}
  Then, we get an upper bound on $\EEys{\KK_{n  } (\xxh_{\theta} )  }$   by using Gr\"{o}nwall's inequality:
\begin{align*} 
\EEys{\KK_n (\xxh_{\theta})} \leq e^{-\WH{a}_n \theta}\KK_n (\yy)+\frac{\WH{b}_n}{\WH{a}_n} \Big( 1- e^{-\WH{a}_n \theta}\Big)\leq e^{-\WH{a}_n \theta}\KK_n (\yy)+\frac{\WH{b}_n}{\WH{a}_n} .
\end{align*}
Plugging the latter result in \eqref{EQ-1}, we have 
\begin{align*}
\norml{\EEys{\ff(\xxh_{{kT+\theta}  })}-\WB{\ff}(\theta)}\leq  C_n  e^{-\lambda_n k T}\normLK{\ff- \WB{\ff}(\theta)} \Big( e^{-\WH{a}_n \theta}\KK_n (\yy)+\frac{\WH{b}_n}{\WH{a}_n} \Big).
\end{align*}
Defining $\lambda_n$ by $e^{-\lambda_n}=\WH{a}_n^{\frac{1}{T}}$, we obtain the expected result by redefining $C_n\to \Big( 1+\frac{\WH{b}_n}{\WH{a}_n}e^{\lambda_n T}\Big)$: 
\begin{align*}
\norml{\EEys{\ff(\xxh_{{kT+\theta}  })}-\WB{\ff}(\theta)}\leq  C_n  e^{-\lambda_n(kT+\theta)}\normLK{\ff- \WB{\ff}(\theta)} \Big( 1+\KK_n (\yy) \Big).
\end{align*}
 \end{proof}

 \subsubsection{Convergence in Law of Large Numbers for $\big({Q_k,P_k} \big)$ }\label{Convergence}
We use Lemmas from previous section and mainly \cite{meyn} to show that    $\big({Q_k,P_k} \big)$ is positive Harris recurrent chain. Thus, the Law of Large Number holds: 
 \begin{proposition}{(Law of Large Numbers for the sampled chain)}\label{prop1} 
For any $\ff\in L_{{\KK}_n}^{\infty}$, 
\begin{align*}
\frac{1}{N}\sum_{k=1}^N \ff(Q_k,P_k)\xrightarrow[{N\to +\infty}]{} \int_{\MM}\ff({\qqb},{\ppb})\nu({\qqb},{\ppb}) d{\qqb}d{\ppb}  \quad \textrm{a,s.}
,
\end{align*}
for all the initial conditions  $(Q_0,P_0)$.
\end{proposition}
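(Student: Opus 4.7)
The plan is to deduce this law of large numbers directly from the structural results already established for the sampled chain $(Q_k, P_k)$: namely the uniform Lyapunov drift condition (Lemma~\ref{Lyap}), the uniform minorization condition (Lemma~\ref{Uniform}), and the positivity/smoothness of the transition kernel (Corollary~\ref{Assumptionmin}). Together these place the chain squarely in the framework of Meyn and Tweedie, where a standard sequence of implications yields ergodicity in the sense needed here.

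First, I would observe that Corollary~\ref{Assumptionmin} together with the minorization condition imply that every compact set in $\MM$ is a small set for the chain $(Q_k, P_k)$, so that in particular the chain is $\vartheta$-irreducible and aperiodic with $\vartheta$ the minorizing measure from Lemma~\ref{Uniform}. Next, the Lyapunov condition $\mathcal{U}_T \KK_n \leq a_n \KK_n + b_n$ with $a_n \in [0,1)$ from Lemma~\ref{Lyap} is precisely the geometric drift condition of \cite{meyn}, so that combined with the small-set/irreducibility property it promotes the chain to being positive Harris recurrent with a unique invariant probability measure, which must coincide with the $\nu$ produced by Theorem~\ref{hairerT}. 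In particular, the drift condition guarantees $\int_\MM \KK_n \, d\nu < \infty$, so any $\ff \in L^\infty_{\KK_n}$ is $\nu$-integrable.

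At that point the standard Harris ergodic theorem (see \cite[Theorem 17.0.1]{meyn}) applies: for any $\ff$ with $|\ff| \leq C \KK_n$, one has the almost-sure convergence
\begin{align*}
\frac{1}{N}\sum_{k=1}^N \ff(Q_k, P_k) \xrightarrow[N\to\infty]{} \int_\MM \ff(\qqb,\ppb) \, \nu(\qqb,\ppb) \, d\qqb\, d\ppb,
\end{align*}
independently of the initial state $(Q_0, P_0)$. The conclusion of Proposition~\ref{prop1} follows immediately.

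The only step that requires any care is checking that the hypotheses of the Meyn--Tweedie LLN are literally met in our setting, and in particular that irreducibility genuinely follows from the minorization on the compact set $C$ together with the Lyapunov condition (which forces the chain to return to $C$ in finite expected time). I expect this to be the only mildly subtle point; the positivity of the transition density from Lemma~\ref{positive-density} makes every point accessible, so $\psi$-irreducibility is in fact immediate with $\psi = \vartheta$. Beyond that, the proof is essentially a citation of \cite{meyn} applied to the chain whose existence and good properties have been established in Sections~\ref{smoothness}--\ref{existence}.
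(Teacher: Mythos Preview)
Your proposal is correct and follows essentially the same strategy as the paper: establish that the sampled chain is positive Harris recurrent and then invoke \cite[Theorem~17.0.1]{meyn} for the almost-sure ergodic average. The only cosmetic difference is the route to Harris recurrence---the paper obtains it from irreducibility (via Corollary~\ref{smooth-density}) together with \cite[Corollary~1]{Tierney}, while you reach it through the Lyapunov drift and minorization conditions; both are standard paths to the same conclusion.
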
 
\begin{proof} 
Corollary~\ref{smooth-density} from the previous Section shows that the chain $\big({Q_k,P_k} \big)$ is  irreducible with respect to the Lebesgues measure. Thus, the result from \cite[Corollary 1]{Tierney} based on \cite[Page 199]{meyn} provides that  the chain $\big({Q_k,P_k} \big)$ is  Harris recurrent. In addition as  $\Big(\nu\circ\Phi_{\theta}\Big)(\theta,\qqb,\ppb )$ is positive using Lemma~\ref{positive-density}, it follows from  \cite[Theorem 17.0.1]{meyn} that the Law of large Numbers   holds.  
\end{proof} 

 \subsubsection{Convergence to the Limit Cycle} \label{convergence}
We consider the following Lemma:
\begin{lemma}\label{convergence lc}
The Markov process $(\qqb_{t},\ppb_{t})$ converges   exponentially  to the the limit cycle $ {\nu\circ\Phi_{t}}$
from any initial configuration $(\qqb_{0},\ppb_{0}).$ 
 \end{lemma}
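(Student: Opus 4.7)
The plan is to deduce Lemma \ref{convergence lc} as an almost immediate consequence of Lemma \ref{uniform-continuous}, which already supplies the bulk of the work. That lemma provides, for each deterministic start $\yy = (\qqb_0, \ppb_0) \in \MM$ and each test function $f \in L^{\infty}(L^{\infty}_{\KK_n})$ with $n > 1$, the pointwise exponential estimate
\begin{align*}
\bigl| \mathbb{E}^{0,\yy}\bigl[(f \circ \Phi_t)(\xxb_t)\bigr] - \WB{f}([t]) \bigr| \le C_n e^{-\lambda_n t} \|f - \WB{f}([t])\|_{L^{\infty}(L^{\infty}_{\KK_n})} \bigl(1 + \KK_n(\yy)\bigr).
\end{align*}
Since $\WB{f}([t])$ is, by definition \eqref{limit-cycle}, the integral of $f \circ \Phi_t$ against the candidate limit cycle $\nu \circ \Phi_t$, this bound is already the exponential weak convergence required by the lemma.

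First, I would record that the estimate above is uniform over all bounded weighted test functions and has finite right-hand side at every starting point, because $\KK_n(\yy) = 1 + \|\ppb_0\|^{2n} < \infty$. This yields exponential convergence to the limit cycle from any deterministic initial configuration $(\qqb_0, \ppb_0)$. Second, to pass to a general initial law $\mu_0$ with $\int \KK_n \, d\mu_0 < \infty$, I would integrate the pointwise bound against $\mu_0(d\yy)$; the factor $(1 + \KK_n(\yy))$ is replaced by its $\mu_0$-integral, and the exponential rate $\lambda_n$ survives. The time dependence enters only through $\theta = [t] \in \toro$, so the limit is genuinely a cycle rather than a stationary measure, consistent with the $T$-periodic structure established in Section \ref{domain-def}.

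The main point to verify, rather than a genuine obstacle, is that convergence against the weighted $L^{\infty}(L^{\infty}_{\KK_n})$ class is strong enough to characterize the limit uniquely. This is furnished by the uniqueness of the sampled-chain invariant measure in Theorem \ref{hairerT}, combined with the smoothness and positivity of $\nu \circ \Phi_t$ established in Corollary \ref{smooth-density} and Lemma \ref{positive-density}; together these ensure that $\nu \circ \Phi_t$ is the unique weak limit and no additional tightness or compactness argument is required. Thus the proof reduces to stating the estimate from Lemma \ref{uniform-continuous}, optionally integrating against an arbitrary initial distribution with finite $\KK_n$-moment, and identifying the limit via \eqref{limit-cycle}.
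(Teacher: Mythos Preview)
Your proposal is correct and, if anything, more direct than the paper's own argument. Both proofs ultimately rest on the pointwise estimate of Lemma~\ref{uniform-continuous}, which already holds for every deterministic start $\yy$; you simply read off the conclusion from that inequality and integrate against an initial law. The paper instead takes a short detour: it first argues irreducibility with respect to Lebesgue measure (via Corollary~\ref{smooth-density} and Lemma~\ref{positive-density}), upgrades this to Harris recurrence using \cite[Corollary~1]{Tierney} and \cite{meyn}, and only then invokes Lemma~\ref{uniform-continuous} to conclude exponential convergence ``for all initial positions.'' This Harris-recurrence step is not strictly needed for the exponential convergence statement itself---as you observe, the bound in Lemma~\ref{uniform-continuous} is already valid at every $\yy$---but it is what the authors reuse immediately afterwards to establish the Law of Large Numbers in Proposition~\ref{convergence lln}. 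So your route is cleaner for the lemma at hand, while the paper's route front-loads an ingredient needed for the next result.
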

 \begin{proof} 
The result from Corollary~\ref{smooth-density} and Lemma~\ref{positive-density} show that the chain is irreducible with respect to the Lebesgues measure. In addition,   the process $\big({\qqb_t,\ppb_t} \big)$ is  Harris recurrent using the result from \cite[Corollary 1]{Tierney} based on \cite[Page 199]{meyn}. Thus, the  exponential convergence of the chain $(\qqb_{t},\ppb_{t})$ to an invariant measure $\big( {\nu\circ\Phi_{t}} \big)$ defined in Lemma~\ref{uniform-continuous}, hold for all initial positions. Finally, using  \cite[Theorem 17.0.1]{meyn}, it follows  that the Law of large Numbers defined in \eqref{lln-cont}   holds as well.
 \end{proof}
Now we use the above result to  prove Proposition~\ref{prop:converge}.
 \begin{proof}[Proof of  Proposition~\ref{prop:converge}]
 The result from Lemma~\ref{convergence lc} shows    part~\ref{unique}  of   Proposition~\ref{prop:converge}, exponential convergence to the limit cycle of the NELD. We prove part~\ref{smooth} of  Proposition~\ref{prop:converge} using  Lemma~\ref{positive-density}  and the forward Kolmogorov equation of the NELD as follows: since $\nu$ is a positive probability density function,  Lemma~\ref{forward-kolmo-lemma} gives
 \begin{align*}
(-\partial_{{\theta}  }+{U}^{\dagger}_{\theta})\big(\nu\circ\Phi_{\theta}\big) =0.
\end{align*}
The last part of  Proposition~\ref{prop:converge} is shown by integrating the equation~\eqref{unique-discetre} with respect $ \nu$. In fact, iterating \eqref{unique-discetre} gives
\begin{align*}
\EEys{ {\KK}_n(\xxh_{{kT }  })}  \leq e^{- {a}_n kT}\KK_n (\yy)  +\frac{b_n}{1-a_n} \implies \int_{\MM}{\KK}_n\big(\nu\circ\Phi_{\theta}\big) \leq \frac{b_n}{1-a_n}.
\end{align*}
 \end{proof}

Now, we derive the proof of Proposition~\ref{convergence lln}.
\begin{proof}[Proof of Proposition~\ref{convergence lln}]
 The result from the previous proposition shows that the Markov process $\big({\qqb_t,\ppb_t} \big)$ converges exponentially to   $\big( {\nu\circ\Phi_{t}} \big)(t,\qqb,\ppb )$ for all initial positions.  In addition since    the chain $\big({\qqb_t,\ppb_t} \big)$ is  Harris recurrent ( \cite[Corollary 1]{Tierney} based on \cite[Page 199]{meyn}) and $\big(\nu\circ\Phi_{t}\big)(t,\qqb,\ppb )$ is positive using Lemma~\ref{positive-density}, it follows from  \cite[Theorem 17.0.1]{meyn} that the Law of large Numbers   holds.  
\end{proof}  
  \section{Conclusion}
  In this paper, we study the ergodicity of NELD under shear flow and planar elongational flow using respectively LE and   KR Periodic boundary conditions. This is essentially formulated in Proposition~\ref{prop:converge} where, after showing  existence and uniqueness of the limit cycle using a Lyapunov function and a minorization condition, we established the exponential convergence of the Markov   chain generated by the  NELD equation given all  the initial conditions. It will be interesting to establish the   convergence analysis for the three dimensional diagonalizable incompressible flow cases  using the R-KR~\cite{R-KR} algorithm or the GenKR~\cite{Dobson,Hunt} algorithm. However advanced analysis will be needed since the latter PBCs's geometry is not as simple as the current's cases studied.

\section*{Acknowledgements}
The authors would like to thank Gabriel Stoltz for helpful comments on an early version of the manuscript.

\bibliographystyle{siam}

\end{document}